\newtheorem{theorem}{Theorem}[section]
\newtheorem{lemma}[theorem]{Lemma}
\newtheorem{proposition}[theorem]{Proposition}
\newcounter{claims}[theorem]
\newtheorem{claim}[claims]{Claim}
\theoremstyle{definition}
\theoremstyle{remark}
\newcommand{\define}[4]{\expandafter#1\csname#3#4\endcsname{#2{#4}}}
\renewcommand{\int}{\text{int}}
\newcommand{\bR}{\mathbb{R}}
\newcommand{\bN}{\mathbb{N}}
\newcommand{\bZ}{\mathbb{Z}}
\newcommand{\bS}{\mathbb{S}}
\newcommand{\mc}{\mathcal}
\newcommand{\cS}{\mathcal{S}}
\newcommand{\del}{\partial}
\newcommand{\isom}{\cong}
\newcommand{\conj}{\simeq}
\newcommand{\res}{\!\!\upharpoonright}
\newcommand{\abs}[1]{\lvert#1\rvert}
\newcommand{\set}[1]{\{#1\}}
\renewcommand{\to}{\rightarrow}
\renewcommand{\conj}{\approx}
\newcommand{\homeo}{\isom}
\renewcommand{\geq}{\geqslant}
\renewcommand{\leq}{\leqslant}
\renewcommand{\emptyset}{\varnothing}
\renewcommand{\epsilon}{\varepsilon}
\newcommand{\red}{\color{red}}
\let\ol\overline
\renewcommand{\H}{\mathcal{H}}
\pgfplotsset{compat=1.15}
\definecolor{qqzzff}{rgb}{0.,0.6,1.}
\definecolor{ududff}{rgb}{0.30196078431372547,0.30196078431372547,1.}
\definecolor{xdxdff}{rgb}{0.49019607843137253,0.49019607843137253,1.}
\definecolor{uuuuuu}{rgb}{0.26666666666666666,0.26666666666666666,0.26666666666666666}
\definecolor{zzttqq}{rgb}{0.6,0.2,0.}
\definecolor{cqcqcq}{rgb}{0.7529411764705882,0.7529411764705882,0.7529411764705882}
\definecolor{ttqqqq}{rgb}{0.2,0.,0.}
\begin{document}
\title{The homeomorphisms of the Sierpi\'nski carpet are not classifiable by countable structures}
\author{Dhruv Kulshreshtha and Aristotelis Panagiotopoulos}

\subjclass[2020]{54H05, 03E15, 54F15, 37B05}

\thanks{This research was supported by the
NSF Grant DMS-2154258: ``Dynamics Beyond Turbulence and Obstructions to Classification''. 
}

\maketitle
\markboth{Dhruv Kulshreshtha and Aristotelis Panagiotopoulos}{The homeomorphisms of the Sierpi\'nski carpet are not classifiable by countable structures} 

\begin{abstract}
We show that  the homeomorphisms of the Sierpi\'nski carpet are not classifiable, up to conjugacy, using isomorphism types of countable structures as invariants.
\end{abstract}

\section{Introduction}

Let $K$ be a compact metrizable space. A  homeomorphism of $K$ is any continuous bijection $\varphi\colon K\to K$. The collection $\H(K)$ of all homeomorphisms of $K$ forms a group under composition. The elements of $\mathcal{H}(K)$ constitute all possible ``symmetries" of the topological space $K$.  Two such symmetries may represent the exact same dynamics on $K$, just in a different ``coordinate system". Consider for example the reflections $\chi, \psi \in\H(T)$ of the  unit circle  $T=\{z\in\mathbb{C}\colon |z|=1\}$ about the $x$ and $y$ axis, respectively, and notice that, up to rotation of $T$ by   $\pi/2$, they induce the  the same dynamics on $T$. More generally, two homeomorphisms $\rho, \rho'$ of $K$ are ``essentially the same"  if they are {\bf conjugate}  $\rho\conj \rho'$, i.e., if 
\[\rho=\sigma \circ \rho' \circ \sigma^{-1} \text{ for some } \sigma\in \H(K).\]

Classifying all possible truly distinct symmetries of $K$ is a fundamental problem in topological dynamics; but one that is rarely feasible using  ``concrete" invariants. Indeed the best one can hope for 
these type of classification problems
is ``classification by countable structures": finding an assignment $\rho\mapsto \mathcal{A}(\rho)$  from  $\H(K)$  to some space  of countable structures (graphs, groups, rings, etc.) so that  $\rho\conj \rho'$ holds if and only if the structures  $\mathcal{A}(\rho)$  and $\mathcal{A}(\rho')$  are isomorphic.  It turns out, the interplay between the topological dimension of $K$ and  classifiability of $\big(\H(K), \conj\hspace{-3pt}\big)$ by countable structures is quite  subtle. For example:
\begin{enumerate}
\item[(1)] $\big(\H(K),\conj\hspace{-3pt}\big)$ is classifiable by countable structures, if  $K$ is  $0$--dimensional;
\item[(2)]   $\big(\H([0,1]),\conj\hspace{-3pt}\big)$ is classifiable by countable structures;
\item[(3)] $\big(\H([0,1]^2),\conj\hspace{-3pt}\big)$ is not classifiable by countable structures.
\end{enumerate}

The idea behind the positive results (1) and (2) above is simple.
Indeed, when $K$ is $0$--dimensional, one can associate to each $\rho\in\H(K)$ the countable structure $\mathcal{B}_{\rho}:=(B,f_{\rho})$, where $B$ is the Boolean algebra of all clopen subsets of $K$ and $f_{\rho}$ is the automorphism of $B$ induced by the action of $\rho$ on clopen sets. By  Stone duality this assignment is a complete classification. Similarly, for each $\rho\in\H([0,1])$ we can find a certain  countable collection $\mathcal{I}_{\rho}$ of disjoint open intervals $(a,b)\subseteq [0,1]$ so that on each $I\in\mathcal{I}_{\rho}$ the homeomorphism $\rho$ is  everywhere increasing, or everywhere deceasing, or everywhere constant. Endowing the countable set $\mathcal{I}_{\rho}$ with a natural ordering and with predicates  recording the aforementioned behavior of  $\rho$ on each $I\in\mathcal{I}_{\rho}$ provides a  classification of  $\big(\H([0,1]),\conj\big)$ by countable structures; see \cite[Section 4.2]{Hjorth2010}, for details.
The negative anti-classification result (3) above is due to Hjorth  \cite[Theorem 4.17]{Hjorth2010} and it indicates that  isomorphism types of countable structures are too simple of invariants to capture all different behaviors  present in $\H([0,1]^2)/{\conj}$.  To make this statement precise, we recall some nomenclature  from invariant descriptive set theory.  

A {\bf classification problem} is a pair $(X,E)$ where $X$ is a Polish space and $E$ is an equivalence relation on $X$.  We say that the classification problem $(X,E)$ is {\bf Borel reducible} to $(Y,F)$,  and  write  $(X,E)\leq_B (Y,F)$,   if   $xEx' \iff f(x)F f(x')$ holds for some Borel map $f\colon X\to Y$.  The Borel assumption here  indicates that there is a ``constructive" way to use the abstract objects in $Y/F$ as invariants for classifying $X/E$.  Such assumption is pertinent, as both quotients $X/E$ and $Y/F$ will typically have the size of the continuum, in which case the axiom of choice guarantees the existence of a (non-constructive) way of ``classifying"  $X/E$ using $Y/F$. 
Within the Borel reduction hierarchy ``classifiability by countable structures" forms a well--defined complexity class whose most complex element is graph isomorphism; see \cite{Gao,Hjorth2010}. In short, we may define a  classification problem $(X,E)$ to be {\bf classifiable by countable structures} if $(X,E)\leq_B (\mathrm{Graphs}(\mathbb{N}), \simeq_{\mathrm{iso}})$. Here,
 $\mathrm{Graphs}(\mathbb{N})$  is identified with the closed subset of the Cantor space $\{0,1\}^{\mathbb{N}\times \mathbb{N}}$ consisting of all $x$ in the latter with $x(n,n)=0$ and $x(n,m)=x(m,n)$ for all $n,m\in\mathbb{N}$. The isomorphism relation  $x\simeq_{\mathrm{iso}}y$ holds if there is a bijection $p\colon \mathbb{N}\to \mathbb{N}$ so that $x(n,m)\iff y(p(n),p(m))$. 
  Since the compact--open topology on $\H(K)$ is Polish when the compact set $K$ is  metrizable, we may view  $\big(\H(K),\conj \hspace{-3pt}\big)$ as a classification problem in the  aforementioned formal sense. 
  
  In view of  (1)--(3) above, one wonders how sensitive is the complexity of $\big(\H(K),\conj\hspace{-3pt}\big)$  to the dimension of $K$. Are there examples of 1--dimensional spaces $K$ with $\big(\H(K),\conj\hspace{-3pt}\big)$  not being classifiable by countable structures? And if so, can we take $K$ to satisfy further properties (planar, locally connected)? The answer turns out to be positive. Recall that the Sierpi\'nski carpet $\cS$  is a one--dimensional planar continuum which can be attained by subtracting from the two--dimensional sphere $S^2$ the interiors $\mathrm{int}(D_n)$ of any sequence $(D_n\colon n\in \mathbb{N})$ of  pairwise disjoint closed balls $D_n \subseteq S^2$ whose union is dense in $S^2$; see \cite{Whyburn1958}.
  
\begin{theorem}\label{T:Main}
$\big(\H(\mathcal{S}),\conj\hspace{-3pt}\big)$ is not classifiable by countable structures.
\end{theorem}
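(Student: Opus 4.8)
The plan is to exhibit a Borel reduction to $(\H(\cS),\conj)$ from an equivalence relation that is already known not to be classifiable by countable structures; since $\leq_B$ is transitive, ``not classifiable by countable structures'' then propagates upward, which is exactly the theorem. Concretely, I would fix a continuous \emph{turbulent} action of a Polish group $G$ on a Polish space $X$, so that by Hjorth's turbulence theorem \cite{Hjorth2010} the orbit equivalence relation $E^X_G$ is not classifiable by countable structures, and then build a continuous map $X\to\H(\cS)$, $x\mapsto\Phi_x$, with $x\,E^X_G\,y\iff\Phi_x\conj\Phi_y$. This is the same high--level route by which Hjorth proved statement (3) for $[0,1]^2$, but there is one essential new difficulty: since $\cS$ is one--dimensional there is no room to plant two--dimensional ``rotation bumps'' on a sequence of round disks, so the continuous conjugacy parameters must instead be carried by the one--dimensional \emph{peripheral structure} of $\cS$, and it is the flexibility of carpet homeomorphisms (Whyburn \cite{Whyburn1958}) that has to be exploited to make this possible.

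For the turbulent source I would use coordinatewise translation of a ``$c_0$--type'' group, e.g.\ $G=c_0(\mathbb{T})=\{(g_n)\in\mathbb{T}^{\mathbb{N}}\colon g_n\to 0\}$ with the supremum metric, acting on $X=\mathbb{T}^{\mathbb{N}}$, whose orbit equivalence relation is $\theta\,E\,\theta'\iff \theta_n-\theta'_n\to 0$. Exactly as for the textbook example $c_0\curvearrowright\mathbb{R}^{\mathbb{N}}$, every orbit is dense and meager and every local orbit is somewhere dense, so the action is turbulent and $E$ is not classifiable by countable structures. (Any of the standard turbulent relations $\mathbb{R}^{\mathbb{N}}/c_0$, $\mathbb{R}^{\mathbb{N}}/\ell^1$, or their $\mathbb{T}$--analogues would serve equally well, and in principle one could also reduce directly from Hjorth's own action witnessing (3); only the combinatorics of the gadget below would change.)

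The heart of the matter is the gadget. Realize $\cS\subseteq S^2$ with complementary Jordan domains $(D_i)_{i\in\mathbb{N}}$ and peripheral circles $C_i=\partial D_i$, and fix a point $p^\ast\in\cS$ together with a sequence of nested Jordan curves $\gamma_0\supseteq\gamma_1\supseteq\gamma_2\supseteq\cdots$ in $\cS$ shrinking to $p^\ast$ (so $\gamma_n$ separates $p^\ast$ from $\gamma_{n-1}$, $\diam(\gamma_n)\to 0$, and $\bigcap_n\overline{\gamma_n}=\{p^\ast\}$), each parametrized by $\mathbb{T}$; let $Q_n$ be the annular subcarpet of $\cS$ bounded by $\gamma_{n-1}$ and $\gamma_n$. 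Using the Whyburn flexibility of carpet homeomorphisms one constructs, for $\theta=(\theta_n)\in\mathbb{T}^{\mathbb{N}}$, a homeomorphism $\Phi_\theta\in\H(\cS)$ that is the identity outside $\bigcup_n Q_n$, fixes $p^\ast$, acts on $\gamma_n$ as the rotation $R_{\theta_n}$, and interpolates through each $Q_n$; since the $Q_n$ shrink to $p^\ast$, the map $\theta\mapsto\Phi_\theta$ is continuous $\mathbb{T}^{\mathbb{N}}\to\H(\cS)$ in the compact--open topology. The goal is then the equivalence
\[
\Phi_\theta\conj\Phi_{\theta'}\quad\Longleftrightarrow\quad \theta\,E\,\theta'.
\]
For ``$\Longleftarrow$'' one produces, for each $g\in c_0(\mathbb{T})$, a conjugating $\sigma_g\in\H(\cS)$ fixing $p^\ast$ that ``absorbs'' a rotation amount $g_n$ into the $n$--th shell; the reason the absorbable amount must tend to $0$ — so that the perturbation group is $c_0(\mathbb{T})$ rather than all of $\mathbb{T}^{\mathbb{N}}$ — is precisely that $\sigma_g$ is forced to be continuous at $p^\ast$. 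For ``$\Longrightarrow$'' one shows that any conjugating homeomorphism must fix $p^\ast$ (the topologically distinguished accumulation point of the support), must respect the nesting of the shells, and that the rotation datum of each gadget is then a conjugacy invariant of $\Phi_\theta$ modulo such a shift, so that one recovers the class $[\theta]\in\mathbb{T}^{\mathbb{N}}/c_0(\mathbb{T})$.

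The main obstacle is the design of the gadget together with this conjugacy computation, and within it the rigidity direction ``$\Longrightarrow$''. The danger on one side is \emph{over--rigidity}: if the parameters $\theta_n$, or the shells $\gamma_n$, become topologically recognizable — for instance if each $\gamma_n$ is forced to be an invariant curve and the rotation number on it is an exact invariant — then conjugacy collapses to plain equality, or to equality of sequences up to permutation, both of which \emph{are} classifiable by countable structures; the danger on the other side is \emph{over--flexibility}, in which conjugation can redistribute rotation so freely that all $\Phi_\theta$ become conjugate. One must therefore engineer the interpolation inside the shells, and analyze the invariant curves of $\Phi_\theta$ and the behaviour of self--homeomorphisms of $\cS$ near an accumulation point of peripheral circles, finely enough that conjugation can shift the gadget parameters by any prescribed $c_0(\mathbb{T})$--vector and nothing more — so that the relation realized is exactly the turbulent $E$. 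Establishing this sharp flexibility/rigidity dichotomy via Whyburn's characterization is where essentially all the work lies; the descriptive--set--theoretic wrapper (continuity of $\theta\mapsto\Phi_\theta$, the appeal to turbulence, transitivity of $\leq_B$) is then routine.
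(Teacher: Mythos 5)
Your high--level strategy (Borel--reduce a turbulent orbit equivalence relation to conjugacy in $\H(\cS)$, following Hjorth's proof for $[0,1]^2$) is exactly the paper's, but your concrete gadget has a fatal flaw, and it is in the direction you treat as the easier one. You propose that $\Phi_\theta$ leave each nested Jordan curve $\gamma_n$ invariant and act on it as the rotation $R_{\theta_n}$, and that for $g\in c_0(\mathbb{T})$ a conjugating $\sigma_g$ should ``absorb a rotation amount $g_n$ into the $n$--th shell.'' Conjugation cannot do this: if $\sigma\Phi_\theta\sigma^{-1}=\Phi_{\theta'}$, then $\sigma$ carries the invariant circle $\gamma_n$ of $\Phi_\theta$ to an invariant circle of $\Phi_{\theta'}$, and the restricted dynamics are conjugate as circle homeomorphisms; in particular the rotation number is preserved (up to orientation). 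Since your shells are topologically recognizable (they are the frontiers between the annular pieces $Q_n$ accumulating at the distinguished point $p^\ast$), the sequence of rotation numbers is a conjugacy invariant of $\Phi_\theta$ up to the finite ambient symmetry, so the relation your map realizes is essentially \emph{equality} of the sequences $(\theta_n)$, not $\mathbb{T}^{\mathbb{N}}/c_0(\mathbb{T})$. Concretely, $\theta=(0,0,\dots)$ and $\theta'=(\tfrac12,\tfrac13,\tfrac14,\dots)$ are $E$--equivalent, yet $\Phi_\theta=\mathrm{id}$ while $\Phi_{\theta'}$ has no fixed point on any $\gamma_n$, so they are not conjugate. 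Your ``$\Longleftarrow$'' direction therefore fails outright, and the relation you land on is smooth, hence classifiable by countable structures. You name this ``over--rigidity'' as a danger, but you supply no mechanism to escape it, and the mechanism you do write down sits squarely inside it.

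The paper (and Hjorth) avoid this by making the parameter a \emph{discrete positional} datum rather than a continuous local dynamical invariant. The integer $g(n)$ is encoded as the index $k=g(n)$ of the one square $I_{n,k}$ in the $n$--th column on which a fixed flow $\pi$ is run backwards; all squares carry mutually conjugate dynamics, so no rotation--number--type invariant obstructs moving the distinguished square. A conjugating map may translate the $n$--th column by any integer $s(n)$, and the \emph{only} constraint is continuity at the accumulation line $\{0\}\times\mathbb{R}$ (the point $O$ after compactification), which forces $s(n)/(n+1)\to 0$ --- exactly the relation $E_{\widehat G}$ on $\prod_{\mathbb{N}}\mathbb{Z}$. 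Making that conjugator an actual homeomorphism of the carpet is itself nontrivial: one needs the deballing of the interstitial strips $L_n$ to admit extensions of boundary translations with uniformly bounded distortion (the paper's Lemma \ref{L:main2}), a technical point with no counterpart in your sketch. To repair your proposal you would have to replace the rotation parameters by positional ones in this spirit; as written, the reduction does not reduce.
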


The proof  follows the general lines of Hjorth's proof of  \cite[Theorem 4.17]{Hjorth2010}. That being said, one needs to improvise significantly in order to remedy the extra rigidity of $\mathcal{S}$ which, unlike $[0,1]^2$, lacks any affine structure. In fact, a priori it is not at all clear that Theorem \ref{T:Main} should be true. Indeed, actions of non-archimedean Polish groups induce orbit equivalence relations which are always classifiable by countable structures \cite[Theorem 2.39]{Hjorth2010} and $\H(\mathcal{S})$ is, in a certain sense, close to being non-archimedean; see \cite[Theorem 1.2]{Brechner}. 

\begin{figure}[ht!]
\centering
\includegraphics[scale=0.09]{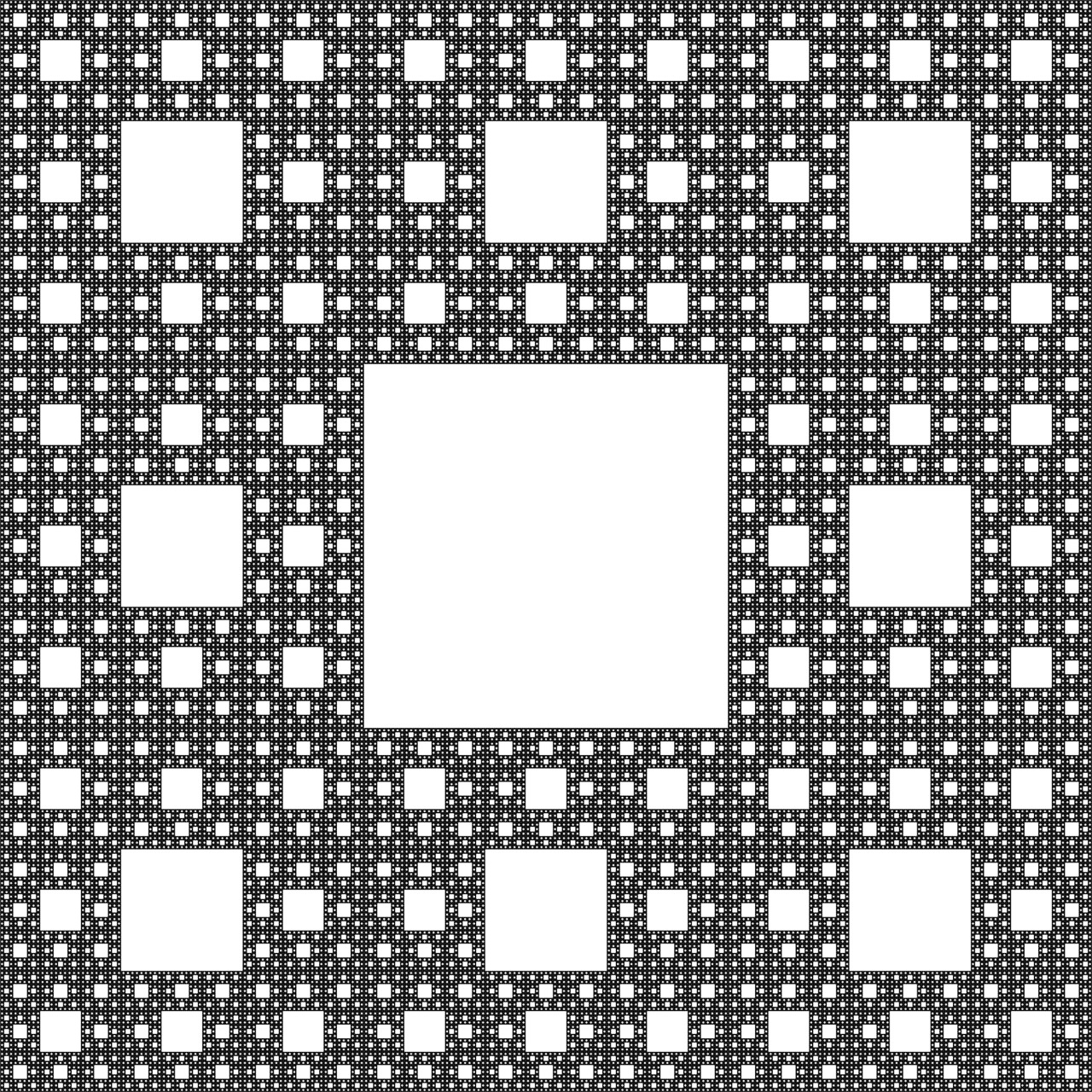}
\caption{The  Sierpinski Carpet $\mathcal{S}$}\label{F:intro1}
\end{figure}

Finally, we should remark that the interplay between topological dimension and various other classification problem has been recently gaining  some visibility \cite{CamDM, Kru,  Dudak, Basso}. For example, consider the problem of classifying  all compact metrizable spaces of dimension at most $n$ up to homeomorphism. While we know  this problem to be classifiable by countable structures \cite{CamGao} for $n=0$, and universal for Polish group actions \cite{Zielinski} for $n=\infty$, the lack of  a ``higher dimensional" version of  Hjorth's  turbulence theory makes the task of  comparing the complexity between intermediate dimensions highly intractable. Theorem \ref{T:Main} is  a biproduct---and in some sense  a first step---in our premature attempts to develop turbulence-like obstructions to classification by actions of homeomorphism groups $\H(K)$, when the dimension of $K$ is at most one.


\section{Preliminaries}\label{SS:Preliminaries}

By a closed ball we mean any subset $B$ of the plane $\mathbb{R}^2$ that is homeomorphic to the closed unit ball $\{(x,y)\in \mathbb{R}^2 \colon |x|^2+|y|^2\leq 1 \}$.
Let $C\subseteq \mathbb{R}^2$ be  closed subset of the plane and let $\partial C=C\setminus\mathrm{int}(C)$ be its boundary. By a  {\bf deballing recipe for $C$} we mean any countable collection  $\mathcal{B}$ of  pairwise disjoint closed balls $B \subseteq \mathbb{R}^2$  so that  $\bigcup_{B\in\mathcal{B}} B$ is dense in $\mathrm{int}(C)$ and  does not intersect $\partial C$.  We denote by $\mathcal{B}(C)=C\setminus \bigcup_{B\in\mathcal{B}} \mathrm{int}(B)$  the associated {\bf deballing} of $C$. The Sierpinski Carpet $\mathcal{S}$ in Figure \ref{F:intro1}, for example, is of the form $\mathcal{B}(C)$
for an appropriate deballing recipe $\mathcal{B}$ of $C = [0,1]^2$.

It turns out that if $C$ is any closed ball and $\mathcal{B}$ is any   deballing recipe for $C$, then  $\mathcal{B}(C)$ is homeomorphic to the Sierpinski Carpet  $\mathcal{S}$ from Figure \ref{F:intro1}. In fact, we have that:

\begin{theorem}[\cite{Whyburn1958}]\label{T:whyburn} If $\mathcal{B}_0$, $\mathcal{B}_1$ are  deballing recipes for the closed balls $C_0$ and $C_1$, respectively, then any homeomorphism $h\colon \partial C_0\to \partial C_1$ extends to a homeomorphism 
\[\widehat{h}\colon \mathcal{B}_0(C_0)\to \mathcal{B}_1(C_1).\]
\end{theorem}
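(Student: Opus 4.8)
The plan is to construct $\widehat h$ by a back-and-forth that matches the balls of $\mathcal B_0$ with those of $\mathcal B_1$ together with their bounding circles, and then to recover $\widehat h$ as the unique continuous extension of the resulting map to the whole carpet. Two preliminary reductions. By the Schoenflies theorem and Alexander's trick, $h$ extends to a homeomorphism $\bar h\colon C_0\to C_1$; replacing $\mathcal B_0$ by $\bar h(\mathcal B_0)$ we may assume $C_0=C_1=:C$ and $h=\mathrm{id}_{\partial C}$, so that it suffices to produce a homeomorphism $\mathcal B_0(C)\to\mathcal B_1(C)$ restricting to the identity on $\partial C$. Next, by the fact recalled just before the theorem, $\mathcal B_i(C)$ is homeomorphic to $\mathcal S$ and hence a Peano continuum, so its complementary domains in $S^2$ — the interiors of the balls of $\mathcal B_i$, together with the complement of $C$ — have diameters tending to $0$. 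This last point is what makes $Z_i:=\partial C\cup\bigcup_{B\in\mathcal B_i}\partial B$ a \emph{dense} subset of $\mathcal B_i(C)$, each point of which is a limit of bounding circles of vanishing diameter.

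The inductive engine is a handful of elementary facts of planar topology: any homeomorphism between the boundaries of two disks-with-$r$-holes respecting the correspondence of boundary components extends to a homeomorphism of the surfaces; a small closed disk lying in the interior of such a surface can be slid onto any nearby small disk by a self-homeomorphism supported in a small set; and in both cases the map can be taken close to the identity when the data are close. Using these, together with the density of $\mathcal B_1$ in $\mathrm{int}(C)$ and the vanishing of diameters, one runs the back-and-forth: fix enumerations $\mathcal B_0=\{B_n^0\}_n$ and $\mathcal B_1=\{B_n^1\}_n$; at even stages take the least-index ball of $\mathcal B_0$ not yet handled and commit it — and a homeomorphism of its bounding circle, chosen freely — to a ball of $\mathcal B_1$ that is small and suitably placed; at odd stages do the symmetric thing from $\mathcal B_1$. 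Alongside the commitments one carries a homeomorphism of $C$ realizing them, modified from stage to stage only by the small near-identity maps furnished by the engine. In the limit this yields a bijection $\sigma\colon\mathbb N\to\mathbb N$, homeomorphisms $h_n\colon\partial B_n^0\to\partial B_{\sigma(n)}^1$, and therefore a homeomorphism $\mathrm{id}_{\partial C}\cup\bigcup_n h_n$ from $Z_0$ onto $Z_1$.

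The choices in the back-and-forth must secure, beyond surjectivity of $\sigma$, a uniform clustering condition: for every $\epsilon>0$ there is $\delta>0$ such that any sub-collection of $\mathcal B_0$ contained in a ball of radius $\delta$ has its $\sigma$-image contained in a ball of radius $\epsilon$, and symmetrically for $\sigma^{-1}$. Granted this, both $\mathrm{id}_{\partial C}\cup\bigcup_n h_n$ and its inverse are uniformly continuous; since $\mathcal B_0(C)$ and $\mathcal B_1(C)$ are compact metric spaces in which $Z_0$ and $Z_1$ are dense, this map extends uniquely to a homeomorphism $\mathcal B_0(C)\to\mathcal B_1(C)$, still the identity on $\partial C$. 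Precomposing with $\bar h$ produces the desired $\widehat h$.

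The main obstacle is exactly the clustering condition, i.e.\ keeping the back-and-forth local. When, at an even stage, a small already-placed disk must be committed to a ball of $\mathcal B_1$, that ball cannot be chosen arbitrarily: it must lie near the already-committed images of the neighbouring balls of $\mathcal B_0$ and be small relative to them, so that no long-range readjustment is ever forced later. Density of $\mathcal B_1$ guarantees that such balls exist, and vanishing of diameters keeps all later modifications small; but organizing the interleaving so that \emph{both} $\sigma$ and $\sigma^{-1}$ stay local while each recipe is exhausted is the delicate point, and it is where Whyburn's argument does its real work.
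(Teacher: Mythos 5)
This is the one statement in the paper for which there is no in-paper proof to compare against: Theorem~\ref{T:whyburn} is imported wholesale from Whyburn's 1958 paper, so you are in effect reconstructing Whyburn's argument. Your overall strategy is indeed the standard one (and essentially Whyburn's): reduce via Schoenflies/Alexander to the case $C_0=C_1$, $h=\mathrm{id}$; match the complementary discs by a back-and-forth, carrying along circle homeomorphisms; and recover $\widehat h$ by uniform continuity on the dense union of boundary circles. The reductions, the "engine" of elementary planar facts, and the final extension-by-density step are all sound (modulo one small point: a boundary homeomorphism of a disc-with-holes extends only if it is coherently oriented across the boundary components, which in your scheme is guaranteed because the $h_n$ are restrictions of an ambient homeomorphism carried along the induction).

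There are, however, two genuine gaps. First, the decisive step --- constructing $\sigma$ and the $h_n$ so that the clustering/equicontinuity condition holds while both enumerations are exhausted --- is announced rather than executed; as you yourself say, this "is where Whyburn's argument does its real work." Since everything else in the proposal is routine, what remains unproved is precisely the content of the theorem, so as it stands this is a proof plan, not a proof. Second, your appeal to the null-sequence property (diameters of the removed discs tending to $0$) is justified by citing "$\mathcal B_i(C)\cong\mathcal S$," but in the paper that fact is presented as a consequence of Theorem~\ref{T:whyburn}, so the justification is circular. Worse, the property does not follow from the paper's literal definition of a deballing recipe: one can take pairwise disjoint closed topological discs of diameter bounded below (e.g.\ thin rectangles $[\tfrac14,\tfrac34]\times[a_n,b_n]$ accumulating on a segment) and complete them to a dense disjoint family; the resulting $\mathcal B(C)$ fails to be locally connected and is not a Sierpi\'nski carpet, so the theorem is false for such a recipe. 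The null condition must therefore be treated as part of the hypotheses (as it is in Whyburn's original formulation), not derived; your proof needs it both for the density of $Z_0$ in $\mathcal B_0(C)$ and for keeping the later modifications small, so this is not a cosmetic omission.
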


Notice that if a closed set $C\subseteq \mathbb{R}^2$ is a union $C=\bigcup_i C_i$ of  a family
$\{C_i\colon i\in  I\}$
 of closed sets with $C_i \cap C_j\subseteq \partial C_i \cap \partial C_j$ for all $i\neq j$ and,  for each $i\in I$, $\mathcal{B}_i$ is a deballing recipe for $C_i$ with $\bigcup \mathcal{B}_i\subseteq C_i$, then $\bigcup_i\mathcal{B}_i$ is a deballing recipe for $C$.
Similarly, if $C,D\subseteq \mathbb{R}^2$ are closed and $h\colon \mathrm{int}(C)\to \mathrm{int}(D)$ is a homeomorphism,   
 any deballing recipe $\mathcal{B}$ for $D$ {\bf pulls back} to a deballing recipe $h^*\mathcal{B}=\{h^{-1}(B)\colon B\in \mathcal{B}\}$ for $C$.

Let $K:=[0,1]\times \mathbb{R}$ be the infinite strip and let $\widehat{K}:=K\cup\{-\infty,+\infty\}$ be the two point compactification of $K$. A basis of open neighborhoods around $-\infty$ in  $\widehat{K}$ consists of all sets of the form $\big([0,1]\times (-\infty,r)\big)\cup\{-\infty\}$, where $r\in\mathbb{R}$; and a  basis of open neighborhoods around $+\infty$ consists of all sets of the form $\big([0,1]\times (r,+\infty)\big)\cup\{+\infty\}$, where $r\in\mathbb{R}$. Clearly  $\widehat{K}$ is homeomorphic to a closed ball. It will be convenient for what follows to fix some homeomorphism between the right-half disc and  $\widehat{K}$: 
\begin{equation}\label{EQ:compactification}
\kappa\colon  \{(x,y)\in \mathbb{R}^2\colon |x|^2+|y|^2\leq 1 \text{ and } x\geq 0\} \to  \widehat{K}
\end{equation} 
so that  $\kappa(0,-1)=-\infty$,  $\kappa(0,+1)=+\infty$, and  $\kappa(x,0)=(x,0)$ for all $x\in [0,1]$.







\section{The outline of the proof}

The  proof of Theorem \ref{T:Main}  follows the general lines  of \cite[Theorem 4.17]{Hjorth2010}. More precisely, consider the Polish group  $G := \prod_{\bN}\bZ$, with the product group structure and topology, and set $\widehat{G}:= \set{g\in G \mid  \lim_{n\to\infty} g(n)/(n+1) = 0}$. Let also $E_{\widehat{G}}$ be the orbit equivalence relation induced by the action of $\widehat{G}$ on $G$  by left translation: for $g,g'\in G$,
\[gE_{\widehat{G}} g' \iff g-g'\in  \widehat{G}.\]
As in  \cite[Lemma 4.16]{Hjorth2010},  it follows that $(G,E_{\widehat{G}})$ is not classifiable by countable structures. Hence, in order to prove Theorem \ref{T:Main}, it suffices to establish that   $(G,E_{\widehat{G}})\leq_B
\big(\H(\mc{S}),\conj\hspace{-3pt}\big)$.

\begin{proposition}\label{Prop:reduction}
There is  a Borel  $\rho : G \to \H(\cS)$, so that for all $g,g' \in G$,  we have 
\begin{equation}
gE_{\widehat{G}} g'  \; \iff \; \rho(g) \conj \rho(g'). 
\end{equation}
\end{proposition}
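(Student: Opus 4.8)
The plan is to follow Hjorth's strategy for $\H([0,1]^2)$ from \cite[Theorem 4.17]{Hjorth2010}, but to replace its affine twisting gadgets by ones built on the straight strip $\widehat{K}$ and transplanted onto $\mathcal S$ through deballing recipes and Theorem \ref{T:whyburn}. First I would fix a realization $\mathcal S = \mathcal B(D)$ of the carpet as a deballing of a closed disc $D$, distinguish a point $p \in \partial D$, identify a neighbourhood of $p$ with a piece of $\widehat{K}$ via $\kappa$, and pick a nested sequence of collar neighbourhoods $N_0 \supseteq N_1 \supseteq \cdots$ of $p$ with $\bigcap_n N_n = \{p\}$ so that the annular band $A_n := \overline{N_n \setminus N_{n+1}}$ has ``width comparable to $1/(n+1)$'' at $p$ in a precise metric sense. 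For $g \in G$, let $\rho(g)$ be the identity off $N_0$ and, on each $A_n$, a twist about $p$ of magnitude governed by $g(n)$, made precise by prescribing --- on the universal cover of a punctured neighbourhood of $p$ --- that the increment of the angular coordinate across band $n$ equal $2\pi$ times a quantity comparable to $g(n)/(n+1)$ (or a suitable partial sum thereof); one can also arrange these increments to be, generically, irrational multiples of $2\pi$. Since $\diam(A_n) \to 0$, the map $\rho(g)$ is continuous at $p$ no matter how large $g(n)$ is, so $\rho(g) \in \H(\mathcal S)$, and $g \mapsto \rho(g)$ is Borel by inspection. To have this act on $\mathcal S$ rather than on $D$, I would either choose the deballing recipe near $p$ invariant under all these twists (deballed discs laid out in concentric rings) or, more robustly, define the band-twists directly on $A_n \cap \mathcal S \cong \mathcal S$ using Theorem \ref{T:whyburn}.

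For the direction $g E_{\widehat{G}} g' \Rightarrow \rho(g) \conj \rho(g')$: given $|g(n) - g'(n)|/(n+1) \to 0$, I would construct the conjugator $\sigma$ as a homeomorphism of $\mathcal S$ fixing $p$ and the complement of $N_0$ that, near $p$, acts as a radial reparametrization --- permitted to drag band $n$ slightly into its neighbours --- chosen to carry the angular-increment profile of $\rho(g)$ onto that of $\rho(g')$. Because $\diam(A_n) \to 0$, any such $\sigma$ that is a homeomorphism on each relevant region automatically glues to a homeomorphism of $\mathcal S$ continuous at $p$; the real content is that the two profiles can be matched at all, which is precisely what $|g(n) - g'(n)|/(n+1) \to 0$ provides, the $1/(n+1)$ calibration turning ``the profiles differ by a function vanishing at $p$'' into ``$\rho(g) \conj \rho(g')$''. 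The identity $\sigma \rho(g) \sigma^{-1} = \rho(g')$ is then checked band by band.

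For the converse $\rho(g) \conj \rho(g') \Rightarrow g E_{\widehat{G}} g'$: assume $\sigma \rho(g)\sigma^{-1} = \rho(g')$. I would first argue that the distinguished structure is canonically attached to $\rho(g)$ --- the point $p$ and the marked circles $\partial N_n$ are recoverable from $\mathrm{Fix}(\rho(g))$ together with the support of $\rho(g)$ --- so that $\sigma(p) = p$ and $\sigma$ carries band $n$ to band $n$, after absorbing a deck transformation. Lifting to the universal cover of a punctured neighbourhood of $p$ and using the genericity of the angular increments to reduce the radial coordinate to a reparametrization, the conjugacy becomes $\Theta(\theta + A_g(s), s) = \Theta(\theta, s) + A_{g'}(\Sigma(s))$, where $s$ is the radial coordinate, $A_g, A_{g'}$ record the prescribed angular increments, $\Sigma$ is increasing with $\Sigma(s) \to 0$ as $s \to 0$, and $\Theta(\cdot, s)$ commutes with the deck translation by $2\pi$. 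Since the oscillation over one period of $\theta \mapsto \Theta(\theta, s) - \theta$ is strictly less than $2\pi$ for every such lift, this forces $|A_{g'}(\Sigma(s)) - A_g(s)| < 2\pi$ for all $s$; evaluating at the levels of the bands and combining the $1/(n+1)$ calibration with the uniform continuity of $\sigma$ at $p$, this upgrades to $|g(n) - g'(n)|/(n+1) \to 0$, i.e.\ $g E_{\widehat{G}} g'$.

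The step I expect to be the main obstacle is the construction itself, for two intertwined reasons. First, the bands, their widths, and the magnitudes of the twists must be calibrated so carefully that the conjugacy relation on the $\rho(g)$'s is exactly $E_{\widehat{G}}$ --- not the strictly finer relation ``the multisets $\{|g(n)|\}$ agree'' that a naive construction from self-contained cell-twists would give, and not anything coarser. Second, $\mathcal S$ carries no affine structure, so neither the band-twists nor the reparametrizations used above can be written down directly: each must be manufactured on the straight model $\widehat{K}$ and carried to $\mathcal S$ by deballing recipes and repeated use of Theorem \ref{T:whyburn}, all the while keeping the assignment uniform and Borel in $g$ and keeping the structure near $p$ topologically recognizable.
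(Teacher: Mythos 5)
Your construction is genuinely different from the paper's (continuous angular twists on shrinking annular bands around a boundary point, versus the paper's discrete $\mathbb{Z}$-indexed columns of boxes $I_{n,k}$ of height $1/(n+1)$ carrying a fixed ``flow'' homeomorphism, with the box $I_{n,g(n)}$ marked by reversing the flow), and the difference is not cosmetic: the twist calibration loses exactly the information the reduction must preserve. A twist whose angular increment across band $n$ is $2\pi g(n)/(n+1)$ is, as a conjugacy invariant, only determined modulo full turns, and your own converse mechanism confirms this: the oscillation bound on lifts yields $|A_{g'}(\Sigma(s))-A_g(s)|<2\pi$, which at band $n$ gives only $|g(n)-g'(n)|<n+1$. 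That is the \emph{bounded-ratio} relation, strictly coarser than $E_{\widehat{G}}$, which requires $|g(n)-g'(n)|/(n+1)\to 0$. Concretely, for $g\equiv 0$ and $g'(n)=n+1$ the prescribed twists on band $n$ coincide (a full turn), so $\rho(g)$ and $\rho(g')$ are equal or conjugate by design, yet $g-g'\notin\widehat{G}$; the reduction fails. The appeal to ``uniform continuity of $\sigma$ at $p$'' cannot upgrade $<2\pi$ to $\to 0$ because a continuous twist profile has no discrete marked objects for $\sigma$ to permute. The paper's construction supplies exactly this discreteness: $\sigma$ must carry the dynamically distinguished box $I_{n,g'(n)}$ to $I_{n,g(n)}$ and hence $I_{n,g'(n)+m}$ to $I_{n,g(n)+m}$ for every integer $m$, so setting $m=-g'(n)$ and using $\sigma(O)=O$ together with continuity at the accumulation point $O$ forces $d(I_{n,0},I_{n,g(n)-g'(n)})\to 0$, i.e.\ $(g(n)-g'(n))/(n+1)\to 0$. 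Some such integer-rigid marking is indispensable.

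There is a second, independent gap in the forward direction. A radial reparametrization conjugates a twist with profile $A_g$ to one with profile $A_g\circ\Sigma^{-1}$; it permutes the values of the profile but cannot create or destroy them, so it cannot carry $\rho(g)$ to $\rho(g')$ even for $g\equiv 0$, $g'\equiv 1$ (which are $E_{\widehat{G}}$-equivalent). The conjugator must contain a genuine angular ``counter-twist,'' and its continuity at $p$ is precisely where $g-g'\in\widehat{G}$ must enter. Moreover, neither the twists nor this counter-twist can be written down on $\mathcal{S}$ directly: your first fallback (a twist-invariant deballing recipe) is impossible for irrational rotation amounts, since the orbit of a removed disc would be dense in an annulus, and your second fallback (extend via Theorem \ref{T:whyburn}) gives no control on how far the extension displaces points, which is fatal when infinitely many bands accumulate at $p$. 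This is exactly the problem the paper isolates and solves in Lemma \ref{L:main2} (a deballing recipe of $[0,1]\times\mathbb{R}$ admitting extensions of $h^{\partial}_{k,\ell}$ with explicitly bounded vertical distortion); you correctly flag this as the main obstacle, but the proposal does not contain a substitute for it.
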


For the proof of Proposition \ref{Prop:reduction} we  will have to rely on the following two lemmas. The proofs of these lemmas will be provided in the Sections \ref{SL:main1} and \ref{SL:main2}, respectively.

\smallskip{}

\subsection{Two Lemmas}
Let $I:=[0,1]^2$ be the unit square and let $\mathcal{B}(I)$ be the Sierpinski carpet resulting from some deballing recipe on $I$; see, e.g., Figure \ref{F:intro1}. Our first lemma states that $\mathcal{B}(I)$ admits some homeomorphism $\pi$ which fixes $\partial I$ pointwise, while the trajectory of  every  $z\in\mathcal{B}(I)\setminus \partial I$ via $\pi$ accumulates to the top and bottom part of the boundary $\partial I$ in a prescribed way. In view of Theorem \ref{T:whyburn}, this can be formulated as follows:

\begin{lemma}\label{L:main1}
There exists a deballing recipe $\mathcal{B}_I$ for $I$ and some $\pi\in\mathcal{H}\big(\mathcal{B}_I(I)\big)$, so that:
\begin{enumerate}
\item if $z\in   \partial I$, then $\pi(z)=z$;
\item if $z=(x,y)\in\mathcal{B}_I(I) \setminus \partial I$, then we have that 
\[\lim_{n\to-\infty}\pi^n(z)=(x,0)\; \text{ and  }\; \lim_{n\to\infty}\pi^n(z)=(x,1).\]
\end{enumerate}
\end{lemma}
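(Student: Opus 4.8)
The plan is to build the homeomorphism $\pi$ not on $I$ directly, but on the compactified strip $\widehat K = K \cup \{-\infty, +\infty\}$, where the desired dynamics — every interior point flowing from $-\infty$ to $+\infty$ — is transparently realized by a vertical translation. Concretely, fix the homeomorphism $\kappa$ from \eqref{EQ:compactification} identifying the right-half disc with $\widehat K$, and consider the map $T\colon \widehat K \to \widehat K$ which on $K = [0,1]\times\mathbb R$ is $T(x,y) = (x, y + \phi(x))$ for a continuous ``speed function'' $\phi\colon[0,1]\to[0,\infty)$ that vanishes exactly on $\{0,1\}$, and which fixes $\pm\infty$. Then $T$ is a homeomorphism of $\widehat K$ fixing the two ``vertical'' boundary edges $\{0\}\times\mathbb R$ and $\{1\}\times\mathbb R$ pointwise (together with $\pm\infty$), and for every interior point $(x,y)$ with $0<x<1$ we have $T^n(x,y)\to +\infty$ as $n\to+\infty$ and $\to -\infty$ as $n\to -\infty$. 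Transporting through $\kappa$, this gives a homeomorphism of the right-half disc that fixes the diameter $\{x=0\}$... wait — rather, it fixes the semicircular arc pointwise and moves points of the open disc toward the two endpoints $\kappa^{-1}(\pm\infty)=(0,\mp 1)$. The point is that on the level of the \emph{solid} disc we have exactly the ``north–south'' dynamics we want, with the boundary circle fixed pointwise.

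Next I would transfer this to the square $I=[0,1]^2$. Fix a homeomorphism $\Phi$ of $I$ onto the right-half disc that is affine-like enough to send the bottom edge $[0,1]\times\{0\}$ and top edge $[0,1]\times\{1\}$ to the two boundary sub-arcs that $\kappa^{-1}$ sends to the limit points, and sends the left and right edges to the semicircle; more precisely, arrange that $\Phi$ carries the bottom edge to a neighborhood arc of $(0,-1)$ and the top edge to a neighborhood arc of $(0,+1)$, with the $x$-coordinate being ``tracked'' so that a point starting above abscissa $x$ limits onto the point of the bottom edge with abscissa $x$ and the point of the top edge with abscissa $x$. Pulling $T$ back along $\kappa^{-1}\circ\Phi$ produces a homeomorphism $\pi_0$ of $I$ satisfying exactly conditions (1) and (2) of the lemma, but for the solid square rather than for a carpet.

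The final step is to pass from $I$ to a carpet $\mathcal B_I(I)$ using Whyburn's extension theorem \ref{T:whyburn}. I would like to apply $\pi_0$ directly to a carpet, but $\pi_0$ need not permute the removed balls of a fixed deballing recipe; so instead I \emph{generate} the recipe from the dynamics. Pick any single ``seed'' closed ball $B_0$ in the interior of $I$, small enough that its forward and backward $\pi_0$-orbit $\{\pi_0^n(B_0) : n\in\mathbb Z\}$ consists of pairwise disjoint balls (possible by continuity and the fact that $\pi_0^n(B_0)$ escapes every compact subset of $\int I$ as $|n|\to\infty$). Take a countable family of such seeds whose two-sided orbits are pairwise disjoint and whose union is dense in $\int I$ and misses $\partial I$; call the resulting collection of balls $\mathcal B_I$. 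By construction $\pi_0$ maps $\bigcup\mathcal B_I$ bijectively onto itself, permuting the open balls, so $\pi_0$ restricts to a homeomorphism $\pi := \pi_0\!\res \mathcal B_I(I)$ of the carpet $\mathcal B_I(I) = I\setminus\bigcup_{B\in\mathcal B_I}\mathrm{int}(B)$, and this restriction still satisfies (1) and (2) since those only concern points of $\partial I$ and limits of orbits, which are inherited from $\pi_0$. (One could alternatively invoke Theorem \ref{T:whyburn} to move a fixed standard carpet, but the orbit-generated recipe is cleaner and avoids fighting with how a reference deballing interacts with $\pi_0$.) I expect the main obstacle to be the bookkeeping in this last step: arranging simultaneously that the two-sided orbits of the chosen seed balls are pairwise disjoint, that their union is dense in $\int I$, and that it avoids $\partial I$ — this requires a careful enumeration/recursion where at stage $n$ one picks a new seed ball small enough (shrinking as needed) to keep its whole orbit disjoint from the finitely many orbits chosen so far, while still filling in a prescribed basic open set; the fact that orbits leave compact sets makes ``disjointness of whole orbits'' a condition one only needs to check on a finite window, which is what makes the recursion go through.
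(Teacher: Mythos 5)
Your overall architecture matches the paper's: first build a homeomorphism of the \emph{solid} square with the required north--south dynamics, then manufacture a deballing recipe that this homeomorphism permutes by propagating balls along its orbits, and finally restrict to the resulting carpet. Your second step (the orbit-generated invariant recipe, with the recursion on seed balls) is sound and is essentially what the paper does, except that the paper makes the bookkeeping trivial by choosing a fundamental domain for the dynamics --- one row of trapezoids --- deballing it once, and pulling that recipe back under all iterates $p^b$.

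The genuine gap is in your first step. Routing through the two-point compactification $\widehat K$ from \eqref{EQ:compactification} cannot produce a map satisfying condition (2) of Lemma \ref{L:main1}. On $\widehat K$ the translation $T(x,y)=(x,y+\phi(x))$ has \emph{every} interior orbit converging to the single point $+\infty$ forward and to $-\infty$ backward, because the two ends of the strip have each been crushed to a point; conjugating by $\kappa$ and then by any homeomorphism $\Phi\colon I\to D$ therefore yields a map $\pi_0$ whose forward-limit function $z\mapsto\lim_{n\to\infty}\pi_0^n(z)$ is \emph{constant} on the interior of $I$. But condition (2) demands that this limit be $(x,1)$, a non-constant function of $z=(x,y)$, and the limit function transforms naturally under conjugation, so no choice of $\Phi$ can repair this; your phrase ``the two boundary sub-arcs that $\kappa^{-1}$ sends to the limit points'' has no referent, since $\kappa^{-1}$ sends only the two points $(0,\pm1)$ to $\pm\infty$. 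The fix is to compactify each end of $K$ by an \emph{interval} rather than a point, i.e.\ to use $[0,1]\times[-\infty,+\infty]\cong I$ (equivalently, conjugate $T$ by $(x,y)\mapsto(x,\sigma(y))$ for an increasing homeomorphism $\sigma\colon\mathbb{R}\to(0,1)$ and extend by the identity on the top and bottom edges); then each orbit does converge to $(x,0)$ and $(x,1)$. This is exactly what the paper's trapezoid decomposition accomplishes concretely: the rows $\bigcup_a T_{a,b}$ are the fundamental domains of the compactified translation, and the shrinking heights $s^k_n\to0$ are what make the extension by the identity on $\partial J$ continuous. With that replacement, the rest of your argument goes through.
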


\smallskip{}

The next lemma allows us to compensate for the lack of affine structure of the Sierpinski carpet.
Let $L:=[0,1]\times\mathbb{R}$, so that  $\partial L=\{0,1\} \times \mathbb{R}$. 
For every   $k,\ell\in \mathbb{Z}$, let
\begin{equation}
h_{k,\ell}^{\partial}\colon \partial L\to \partial L \quad \text{ with } \quad h^{\partial}_{k,\ell}(0,y)=(0,y+k) \text{ and } h^{\partial}_{k,\ell}(1,y)=(1,y+\ell).
\end{equation}
Using the affine structure on $L$ one can   extend   $h_{k,\ell}^{\partial}$ to the homeomorphism of $L$
\begin{equation}
h_{k,\ell}\colon L\to L \quad \text{ with } \quad h_{k,\ell}(x,y)=\big(x, (1-x)(y+k) + x (y+\ell)\big).
\end{equation}
Similarly, using Theorem \ref{T:whyburn} and the compactification $\kappa$ from (\ref{EQ:compactification}), one may  extend $h_{k,\ell}^{\partial}$ to a homeomorphism $h^{\mathcal{B}}_{k,\ell}\colon \mathcal{B}(L) \to  \mathcal{B}(L)$ for any deballing recipe $\mathcal{B}$ for $L$.  
However, Theorem \ref{T:whyburn} does not provide enough control on the extension, and the maps $h^{\mathcal{B}}_{k,\ell}$ and $h_{k,\ell}\upharpoonright  \mathcal{B}(L)$ may grow unboundedly far as we keep sampling from $(x,y)\in \mathcal{B}(L)$ with $y\to\pm\infty$. 
The next lemma  shows that one may always choose an appropriate deballing recipe $\mathcal{B}$  for $L$ so that,  for all $k,\ell\in\mathbb{Z}$, $h_{k,\ell}^{\partial}$ admits an  extension $h^{\mathcal{B}}_{k,\ell}$ of bounded distortion relative to $h_{k,\ell}\upharpoonright  \mathcal{B}(L)$.

\begin{lemma}\label{L:main2}
There exists a deballing recipe $\mathcal{B}_L$ for $L$ so that for every $k,\ell\in\mathbb{Z}$ there exists a homeomorphism $h^{\mathcal{B}}_{k,\ell}\colon \mathcal{B}_L(L) \to  \mathcal{B}_L(L)$ which  extends $h^{\partial}_{k,\ell}$,
so that for all $(x,y)\in L$, if we set $(\tilde{x},\tilde{y}):=h^{\mathcal{B}}_{k,\ell}(x,y)$, then we have that
\[   \lfloor y \rfloor +\min\{k,\ell\}-1 \;  \leq \;  \tilde{y} \;  \leq  \; (\lfloor y \rfloor+1) +\max\{k,\ell\}+1,\]
where $\lfloor y \rfloor$  is the unique integer with  $0\leq y-\lfloor y \rfloor <1$.
\end{lemma}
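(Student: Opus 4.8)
The plan is to build the deballing recipe $\mathcal{B}_L$ for $L=[0,1]\times\mathbb{R}$ in a \emph{periodic} way with respect to the vertical integer translation $\tau\colon L\to L$, $\tau(x,y)=(x,y+1)$, and to exploit the simple transitivity of this translation action on the ``floors'' $[0,1]\times[n,n+1]$. Concretely, fix a deballing recipe $\mathcal{B}_0$ for the single floor $[0,1]\times[0,1]$ whose union avoids the whole boundary (not just $\partial([0,1]^2)$ but also the two horizontal edges $[0,1]\times\{0\}$ and $[0,1]\times\{1\}$), and set $\mathcal{B}_L:=\bigcup_{n\in\mathbb{Z}}\tau^n(\mathcal{B}_0)$. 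By the gluing observation recorded after Theorem~\ref{T:whyburn} (the floors meet only along horizontal edges, which lie in their boundaries), $\mathcal{B}_L$ is a genuine deballing recipe for $L$, and moreover $\tau$ itself is a homeomorphism of $\mathcal{B}_L(L)$. The point of the construction is that $\mathcal{B}_L(L)$ is now a $\tau$-periodic Sierpi\'nski-carpet-like continuum, and a single floor $\mathcal{B}_0([0,1]^2)$ is a copy of $\mathcal{S}$ whose boundary is the topological square $\partial([0,1]^2)$.

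Next I would construct $h^{\mathcal{B}}_{k,\ell}$ floor by floor. The boundary map $h^{\partial}_{k,\ell}$ sends the left edge $\{0\}\times\mathbb{R}$ by $y\mapsto y+k$ and the right edge by $y\mapsto y+\ell$; on the closed floor $F_n:=[0,1]\times[n,n+1]$ its restriction to $\partial F_n$ is a homeomorphism onto a quadrilateral region $R_n\subseteq L$ bounded by the two vertical segments $\{0\}\times[n+k,n+k+1]$, $\{1\}\times[n+\ell,n+\ell+1]$ and two horizontal-ish arcs along $y=n$ and $y=n+1$ of the affine map $h_{k,\ell}$. Crucially $R_n$ is contained in the union of at most $(k-\ell)$-or-so consecutive floors; in fact every point of $R_n$ has $y$-coordinate between $n+\min\{k,\ell\}$ and $(n+1)+\max\{k,\ell\}$. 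Now $R_n$, with the deballing recipe inherited from $\mathcal{B}_L$ pulled back appropriately, together with its boundary $\partial R_n$, is again a closed-ball deballing; and $\partial F_n$ is mapped homeomorphically to $\partial R_n$ by $h^{\partial}_{k,\ell}$ (using $\kappa$ at the two ends of the strip to handle $\pm\infty$, exactly as in the statement's discussion). Hence Theorem~\ref{T:whyburn} produces a homeomorphism $\widehat{h}_n\colon \mathcal{B}_L(F_n)\to \mathcal{B}_L(L)\cap R_n$ extending $h^{\partial}_{k,\ell}\!\!\upharpoonright\partial F_n$. Because these partial homeomorphisms agree on the shared horizontal edges (where they are both just $h^{\partial}_{k,\ell}$, which is a homeomorphism of those edges), they glue to a single continuous bijection $h^{\mathcal{B}}_{k,\ell}\colon \mathcal{B}_L(L)\to\mathcal{B}_L(L)$; continuity of the inverse and bijectivity follow because the images $R_n$ tile $L$ the same way the $F_n$ do (they are the $h_{k,\ell}$-images of the $F_n$) and the gluing is locally finite.

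Finally, the distortion bound is automatic from the containment $\widehat{h}_n(\mathcal{B}_L(F_n))\subseteq R_n$: if $(x,y)\in L$ lies on floor $F_{\lfloor y\rfloor}$, then $(\tilde x,\tilde y)=h^{\mathcal{B}}_{k,\ell}(x,y)\in R_{\lfloor y\rfloor}$, and every point of $R_{\lfloor y\rfloor}$ has $y$-coordinate in $[\lfloor y\rfloor+\min\{k,\ell\},\ (\lfloor y\rfloor+1)+\max\{k,\ell\}]$, which is contained in the slightly larger interval $[\lfloor y\rfloor+\min\{k,\ell\}-1,\ (\lfloor y\rfloor+1)+\max\{k,\ell\}+1]$ demanded by the lemma. (The $\pm1$ slack costs nothing and absorbs any convention mismatch about which floors $R_n$ touches.) The main obstacle I anticipate is the gluing/bookkeeping in the second paragraph: one must be careful that the region $R_n$ really is a closed topological disc with $\partial R_n=h^{\partial}_{k,\ell}(\partial F_n)$, that the deballing recipe $\mathcal{B}_L$ restricted to $R_n$ is admissible (its union is dense in $\mathrm{int}(R_n)$ and misses $\partial R_n$ — this is where avoiding the horizontal edges in the choice of $\mathcal{B}_0$ is used), and that the countably many Whyburn extensions $\widehat{h}_n$ patch to a genuine homeomorphism rather than merely a continuous bijection; the two-point compactification $\kappa$ must be invoked to see that the endpoints $\pm\infty$ behave, and one should check that the map is proper so that compactness of $\widehat{K}$ transfers continuity of the inverse.
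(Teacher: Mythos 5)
There is a genuine gap, and it sits exactly at the point your proposal dismisses as bookkeeping. You take $R_n$ to be the affine image $h_{k,\ell}(F_n)$ of the floor $F_n=[0,1]\times[n,n+1]$, so that when $k\neq\ell$ the top and bottom arcs of $\partial R_n$ are the straight segments from $(0,n+k)$ to $(1,n+\ell)$ and from $(0,n+k+1)$ to $(1,n+\ell+1)$. For Theorem \ref{T:whyburn} to apply to $R_n$ with the recipe ``inherited from $\mathcal{B}_L$,'' every ball of $\mathcal{B}_L$ meeting $R_n$ must lie in $\mathrm{int}(R_n)$; equivalently, $\partial R_n$ must miss every ball. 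But the lemma requires a \emph{single} recipe $\mathcal{B}_L$ that works for \emph{all} $k,\ell\in\mathbb{Z}$, and the union over all $n,k,\ell$ of the straight segments joining $(0,a)$ to $(1,b)$, $a,b\in\mathbb{Z}$, is dense in $L$. Hence no deballing recipe whatsoever can avoid all of these segments: for any choice of $\mathcal{B}_L$ there are $k,\ell,n$ for which some ball is cut by $\partial R_n$, the restricted recipe is not a deballing recipe for $R_n$, and the Whyburn extension $\widehat{h}_n$ does not exist. Periodicity of $\mathcal{B}_0$ only protects the horizontal edges, i.e.\ the case $k=\ell$, where your argument is indeed fine.

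This obstruction is the actual content of the lemma (it is why the paper says Theorem \ref{T:whyburn} ``does not provide enough control''), and the paper's proof is organized entirely around overcoming it: one constructs, simultaneously with the balls, perturbed piecewise-linear paths $\pi_{a,b}$ from $(0,a)$ to $(1,b)$ that (i) stay within vertical distance $1$ of $[\min\{a,b\},\max\{a,b\}]$, (ii) are pairwise disjoint whenever $b-a$ is the same (so that for fixed $k,\ell$ the regions they bound tile $L$), and (iii) avoid every ball; the paths with $a=b$ are kept straight so that $Q_a$ is the genuine floor $[0,1]\times[a,a+1]$. The target region is then taken to be bounded by $\pi_{a+k,a+\ell}$ and $\pi_{a+k+1,a+\ell+1}$ rather than by the affine segments, and only then does Whyburn's theorem apply. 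Note also that the $\pm1$ slack in the stated inequality is not a ``convention mismatch'' cushion: it is precisely the room needed for these unavoidable perturbations. To repair your argument you would have to carry out essentially this construction, which is the nontrivial part of the proof.
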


\subsection{The Borel reduction} Using the above two lemmas, can now  prove Proposition \ref{Prop:reduction}. Let $K:=[0,1]\times \mathbb{R}$ be the infinite strip and decompose $K$ into a union
\begin{gather*}
  K=\big(\bigcup_{n} L_n\big)\cup\big(\bigcup_{n,k} I_{n,k}\big), \; \text{ with } \\
  L_n:=[\frac{2}{4^{n+1}},\frac{1}{4^{n}}]\times \mathbb{R}  \quad \text{ and } \quad  I_{n,k}:=
[\frac{1}{4^{n+1}}, \frac{2}{4^{n+1}}]\times [\frac{k-1}{n+1}, \frac{k}{n+1}],
\end{gather*}
where $n\in \mathbb{N}$ ranges over the natural numbers   and  $k\in \mathbb{Z}$ over the integers; see Figure \ref{F:2}.


\begin{figure}[ht!]
\begin{tikzpicture}[line cap=round,line join=round,>=triangle 45,x=12.0cm,y=4.0cm,scale=0.9]
\begin{axis}[
x=12.0cm,y=4.0cm,
axis lines=middle,
xmin=-0.15,
xmax=1.15,
ymin=-1.15,
ymax=1.15,
xtick={-0.0,...,1.0},
ytick={-1.0,0.0,...,1.0},]
\clip(-0.15,-1.15) rectangle (1.15,1.15);
\fill[line width=2.pt,color=ududff,fill=ududff,fill opacity=0.10000000149011612] (0.25,0.) -- (0.25,0.5) -- (0.5,0.5) -- (0.5,0.) -- cycle;
\fill[line width=2.pt,color=ududff,fill=ududff,fill opacity=0.10000000149011612] (0.25,0.5) -- (0.25,1.) -- (0.5,1.) -- (0.5,0.5) -- cycle;
\fill[line width=2.pt,color=ududff,fill=ududff,fill opacity=0.10000000149011612] (0.25,0.) -- (0.5,0.) -- (0.5,-0.5) -- (0.25,-0.5) -- cycle;
\fill[line width=2.pt,color=ududff,fill=ududff,fill opacity=0.10000000149011612] (0.25,-1.) -- (0.25,-0.5) -- (0.5,-0.5) -- (0.5,-1.) -- cycle;
\fill[line width=2.pt,color=ududff,fill=ududff,fill opacity=0.10000000149011612] (0.06,-1.) -- (0.06,-0.75) -- (0.13,-0.75) -- (0.13,-1.) -- cycle;
\fill[line width=2.pt,color=ududff,fill=ududff,fill opacity=0.10000000149011612] (0.06,-0.75) -- (0.06,-0.5) -- (0.13,-0.5) -- (0.13,-0.75) -- cycle;
\fill[line width=2.pt,color=ududff,fill=ududff,fill opacity=0.10000000149011612] (0.06,-0.5) -- (0.06,-0.25) -- (0.13,-0.25) -- (0.13,-0.5) -- cycle;
\fill[line width=2.pt,color=ududff,fill=ududff,fill opacity=0.10000000149011612] (0.06,-0.25) -- (0.06,0.) -- (0.13,0.) -- (0.13,-0.25) -- cycle;
\fill[line width=2.pt,color=ududff,fill=ududff,fill opacity=0.10000000149011612] (0.06,0.) -- (0.06,0.25) -- (0.13,0.25) -- (0.13,0.) -- cycle;
\fill[line width=2.pt,color=ududff,fill=ududff,fill opacity=0.10000000149011612] (0.06,0.25) -- (0.06,0.5) -- (0.13,0.5) -- (0.13,0.25) -- cycle;
\fill[line width=2.pt,color=ududff,fill=ududff,fill opacity=0.10000000149011612] (0.06,0.5) -- (0.06,0.75) -- (0.13,0.75) -- (0.13,0.5) -- cycle;
\fill[line width=2.pt,color=ududff,fill=ududff,fill opacity=0.10000000149011612] (0.06,0.75) -- (0.06,1.) -- (0.13,1.) -- (0.13,0.75) -- cycle;
\fill[line width=2.pt,color=ududff,fill=ududff,fill opacity=0.10000000149011612] (0.015,1.) -- (0.03,1.) -- (0.03,0.875) -- (0.015,0.875) -- cycle;
\fill[line width=2.pt,color=ududff,fill=ududff,fill opacity=0.10000000149011612] (0.015,0.875) -- (0.015,0.75) -- (0.03,0.75) -- (0.03,0.875) -- cycle;
\fill[line width=2.pt,color=ududff,fill=ududff,fill opacity=0.10000000149011612] (0.015,0.75) -- (0.015,0.625) -- (0.03,0.625) -- (0.03,0.75) -- cycle;
\fill[line width=2.pt,color=ududff,fill=ududff,fill opacity=0.10000000149011612] (0.015,0.625) -- (0.015,0.5) -- (0.03,0.5) -- (0.03,0.625) -- cycle;
\fill[line width=2.pt,color=ududff,fill=ududff,fill opacity=0.10000000149011612] (0.015,0.5) -- (0.015,0.375) -- (0.03,0.375) -- (0.03,0.5) -- cycle;
\fill[line width=2.pt,color=ududff,fill=ududff,fill opacity=0.10000000149011612] (0.015,0.375) -- (0.015,0.25) -- (0.03,0.25) -- (0.03,0.375) -- cycle;
\fill[line width=2.pt,color=ududff,fill=ududff,fill opacity=0.10000000149011612] (0.015,0.25) -- (0.015,0.125) -- (0.03,0.125) -- (0.03,0.25) -- cycle;
\fill[line width=2.pt,color=ududff,fill=ududff,fill opacity=0.10000000149011612] (0.015,0.125) -- (0.015,0.) -- (0.03,0.) -- (0.03,0.125) -- cycle;
\fill[line width=2.pt,color=ududff,fill=ududff,fill opacity=0.10000000149011612] (0.015,-0.125) -- (0.015,0.) -- (0.03,0.) -- (0.03,-0.125) -- cycle;
\fill[line width=2.pt,color=ududff,fill=ududff,fill opacity=0.10000000149011612] (0.015,-0.25) -- (0.015,-0.125) -- (0.03,-0.125) -- (0.03,-0.25) -- cycle;
\fill[line width=2.pt,color=ududff,fill=ududff,fill opacity=0.10000000149011612] (0.015,-0.25) -- (0.015,-0.375) -- (0.03,-0.375) -- (0.03,-0.25) -- cycle;
\fill[line width=2.pt,color=ududff,fill=ududff,fill opacity=0.10000000149011612] (0.015,-0.375) -- (0.03,-0.375) -- (0.03,-0.5) -- (0.015,-0.5) -- cycle;
\fill[line width=2.pt,color=ududff,fill=ududff,fill opacity=0.10000000149011612] (0.015,-0.625) -- (0.015,-0.5) -- (0.03,-0.5) -- (0.03,-0.625) -- cycle;
\fill[line width=2.pt,color=ududff,fill=ududff,fill opacity=0.10000000149011612] (0.015,-0.625) -- (0.015,-0.75) -- (0.03,-0.75) -- (0.03,-0.625) -- cycle;
\fill[line width=2.pt,color=ududff,fill=ududff,fill opacity=0.10000000149011612] (0.015,-0.875) -- (0.015,-0.75) -- (0.03,-0.75) -- (0.03,-0.875) -- cycle;
\fill[line width=2.pt,color=ududff,fill=ududff,fill opacity=0.10000000149011612] (0.015,-1.) -- (0.015,-0.875) -- (0.03,-0.875) -- (0.03,-1.) -- cycle;
\draw [line width=1.2pt,dash pattern=on 1pt off 1pt] (1.,-1.15) -- (1.,1.15);
\draw [line width=1.2pt,dash pattern=on 1pt off 1pt] (0.5,-1.15) -- (0.5,1.15);
\draw [line width=1.2pt,dash pattern=on 1pt off 1pt] (0.25,-1.15) -- (0.25,1.15);
\draw [line width=1.2pt,dash pattern=on 1pt off 1pt] (0.13,-1.15) -- (0.13,1.15);
\draw [line width=1.2pt,dash pattern=on 1pt off 1pt] (0.06,-1.15) -- (0.06,1.15);
\draw [line width=1.2pt,dash pattern=on 1pt off 1pt] (0.03,-1.15) -- (0.03,1.15);
\draw [line width=1.2pt,dash pattern=on 1pt off 1pt] (0.015,-1.15) -- (0.015,1.15);
\draw [line width=2.pt,color=ududff] (0.25,0.)-- (0.25,0.5);
\draw [line width=2.pt,color=ududff] (0.25,0.5)-- (0.5,0.5);
\draw [line width=2.pt,color=ududff] (0.5,0.5)-- (0.5,0.);
\draw [line width=2.pt,color=ududff] (0.5,0.)-- (0.25,0.);
\draw [line width=2.pt,color=ududff] (0.25,0.5)-- (0.25,1.);
\draw [line width=2.pt,color=ududff] (0.25,1.)-- (0.5,1.);
\draw [line width=2.pt,color=ududff] (0.5,1.)-- (0.5,0.5);
\draw [line width=2.pt,color=ududff] (0.5,0.5)-- (0.25,0.5);
\draw [line width=2.pt,color=ududff] (0.25,0.)-- (0.5,0.);
\draw [line width=2.pt,color=ududff] (0.5,0.)-- (0.5,-0.5);
\draw [line width=2.pt,color=ududff] (0.5,-0.5)-- (0.25,-0.5);
\draw [line width=2.pt,color=ududff] (0.25,-0.5)-- (0.25,0.);
\draw [line width=2.pt,color=ududff] (0.25,-1.)-- (0.25,-0.5);
\draw [line width=2.pt,color=ududff] (0.25,-0.5)-- (0.5,-0.5);
\draw [line width=2.pt,color=ududff] (0.5,-0.5)-- (0.5,-1.);
\draw [line width=2.pt,color=ududff] (0.5,-1.)-- (0.25,-1.);
\draw [line width=2.pt,color=ududff] (0.06,-1.)-- (0.06,-0.75);
\draw [line width=2.pt,color=ududff] (0.06,-0.75)-- (0.13,-0.75);
\draw [line width=2.pt,color=ududff] (0.13,-0.75)-- (0.13,-1.);
\draw [line width=2.pt,color=ududff] (0.13,-1.)-- (0.06,-1.);
\draw [line width=2.pt,color=ududff] (0.06,-0.75)-- (0.06,-0.5);
\draw [line width=2.pt,color=ududff] (0.06,-0.5)-- (0.13,-0.5);
\draw [line width=2.pt,color=ududff] (0.13,-0.5)-- (0.13,-0.75);
\draw [line width=2.pt,color=ududff] (0.13,-0.75)-- (0.06,-0.75);
\draw [line width=2.pt,color=ududff] (0.06,-0.5)-- (0.06,-0.25);
\draw [line width=2.pt,color=ududff] (0.06,-0.25)-- (0.13,-0.25);
\draw [line width=2.pt,color=ududff] (0.13,-0.25)-- (0.13,-0.5);
\draw [line width=2.pt,color=ududff] (0.13,-0.5)-- (0.06,-0.5);
\draw [line width=2.pt,color=ududff] (0.06,-0.25)-- (0.06,0.);
\draw [line width=2.pt,color=ududff] (0.06,0.)-- (0.13,0.);
\draw [line width=2.pt,color=ududff] (0.13,0.)-- (0.13,-0.25);
\draw [line width=2.pt,color=ududff] (0.13,-0.25)-- (0.06,-0.25);
\draw [line width=2.pt,color=ududff] (0.06,0.)-- (0.06,0.25);
\draw [line width=2.pt,color=ududff] (0.06,0.25)-- (0.13,0.25);
\draw [line width=2.pt,color=ududff] (0.13,0.25)-- (0.13,0.);
\draw [line width=2.pt,color=ududff] (0.13,0.)-- (0.06,0.);
\draw [line width=2.pt,color=ududff] (0.06,0.25)-- (0.06,0.5);
\draw [line width=2.pt,color=ududff] (0.06,0.5)-- (0.13,0.5);
\draw [line width=2.pt,color=ududff] (0.13,0.5)-- (0.13,0.25);
\draw [line width=2.pt,color=ududff] (0.13,0.25)-- (0.06,0.25);
\draw [line width=2.pt,color=ududff] (0.06,0.5)-- (0.06,0.75);
\draw [line width=2.pt,color=ududff] (0.06,0.75)-- (0.13,0.75);
\draw [line width=2.pt,color=ududff] (0.13,0.75)-- (0.13,0.5);
\draw [line width=2.pt,color=ududff] (0.13,0.5)-- (0.06,0.5);
\draw [line width=2.pt,color=ududff] (0.06,0.75)-- (0.06,1.);
\draw [line width=2.pt,color=ududff] (0.06,1.)-- (0.13,1.);
\draw [line width=2.pt,color=ududff] (0.13,1.)-- (0.13,0.75);
\draw [line width=2.pt,color=ududff] (0.13,0.75)-- (0.06,0.75);
\draw [line width=2.pt,color=ududff] (0.015,1.)-- (0.03,1.);
\draw [line width=2.pt,color=ududff] (0.03,1.)-- (0.03,0.875);
\draw [line width=2.pt,color=ududff] (0.03,0.875)-- (0.015,0.875);
\draw [line width=2.pt,color=ududff] (0.015,0.875)-- (0.015,1.);
\draw [line width=2.pt,color=ududff] (0.015,0.875)-- (0.015,0.75);
\draw [line width=2.pt,color=ududff] (0.015,0.75)-- (0.03,0.75);
\draw [line width=2.pt,color=ududff] (0.03,0.75)-- (0.03,0.875);
\draw [line width=2.pt,color=ududff] (0.03,0.875)-- (0.015,0.875);
\draw [line width=2.pt,color=ududff] (0.015,0.75)-- (0.015,0.625);
\draw [line width=2.pt,color=ududff] (0.015,0.625)-- (0.03,0.625);
\draw [line width=2.pt,color=ududff] (0.03,0.625)-- (0.03,0.75);
\draw [line width=2.pt,color=ududff] (0.03,0.75)-- (0.015,0.75);
\draw [line width=2.pt,color=ududff] (0.015,0.625)-- (0.015,0.5);
\draw [line width=2.pt,color=ududff] (0.015,0.5)-- (0.03,0.5);
\draw [line width=2.pt,color=ududff] (0.03,0.5)-- (0.03,0.625);
\draw [line width=2.pt,color=ududff] (0.03,0.625)-- (0.015,0.625);
\draw [line width=2.pt,color=ududff] (0.015,0.5)-- (0.015,0.375);
\draw [line width=2.pt,color=ududff] (0.015,0.375)-- (0.03,0.375);
\draw [line width=2.pt,color=ududff] (0.03,0.375)-- (0.03,0.5);
\draw [line width=2.pt,color=ududff] (0.03,0.5)-- (0.015,0.5);
\draw [line width=2.pt,color=ududff] (0.015,0.375)-- (0.015,0.25);
\draw [line width=2.pt,color=ududff] (0.015,0.25)-- (0.03,0.25);
\draw [line width=2.pt,color=ududff] (0.03,0.25)-- (0.03,0.375);
\draw [line width=2.pt,color=ududff] (0.03,0.375)-- (0.015,0.375);
\draw [line width=2.pt,color=ududff] (0.015,0.25)-- (0.015,0.125);
\draw [line width=2.pt,color=ududff] (0.015,0.125)-- (0.03,0.125);
\draw [line width=2.pt,color=ududff] (0.03,0.125)-- (0.03,0.25);
\draw [line width=2.pt,color=ududff] (0.03,0.25)-- (0.015,0.25);
\draw [line width=2.pt,color=ududff] (0.015,0.125)-- (0.015,0.);
\draw [line width=2.pt,color=ududff] (0.015,0.)-- (0.03,0.);
\draw [line width=2.pt,color=ududff] (0.03,0.)-- (0.03,0.125);
\draw [line width=2.pt,color=ududff] (0.03,0.125)-- (0.015,0.125);
\draw [line width=2.pt,color=ududff] (0.015,-0.125)-- (0.015,0.);
\draw [line width=2.pt,color=ududff] (0.015,0.)-- (0.03,0.);
\draw [line width=2.pt,color=ududff] (0.03,0.)-- (0.03,-0.125);
\draw [line width=2.pt,color=ududff] (0.03,-0.125)-- (0.015,-0.125);
\draw [line width=2.pt,color=ududff] (0.015,-0.25)-- (0.015,-0.125);
\draw [line width=2.pt,color=ududff] (0.015,-0.125)-- (0.03,-0.125);
\draw [line width=2.pt,color=ududff] (0.03,-0.125)-- (0.03,-0.25);
\draw [line width=2.pt,color=ududff] (0.03,-0.25)-- (0.015,-0.25);
\draw [line width=2.pt,color=ududff] (0.015,-0.25)-- (0.015,-0.375);
\draw [line width=2.pt,color=ududff] (0.015,-0.375)-- (0.03,-0.375);
\draw [line width=2.pt,color=ududff] (0.03,-0.375)-- (0.03,-0.25);
\draw [line width=2.pt,color=ududff] (0.03,-0.25)-- (0.015,-0.25);
\draw [line width=2.pt,color=ududff] (0.015,-0.375)-- (0.03,-0.375);
\draw [line width=2.pt,color=ududff] (0.03,-0.375)-- (0.03,-0.5);
\draw [line width=2.pt,color=ududff] (0.03,-0.5)-- (0.015,-0.5);
\draw [line width=2.pt,color=ududff] (0.015,-0.5)-- (0.015,-0.375);
\draw [line width=2.pt,color=ududff] (0.015,-0.625)-- (0.015,-0.5);
\draw [line width=2.pt,color=ududff] (0.015,-0.5)-- (0.03,-0.5);
\draw [line width=2.pt,color=ududff] (0.03,-0.5)-- (0.03,-0.625);
\draw [line width=2.pt,color=ududff] (0.03,-0.625)-- (0.015,-0.625);
\draw [line width=2.pt,color=ududff] (0.015,-0.625)-- (0.015,-0.75);
\draw [line width=2.pt,color=ududff] (0.015,-0.75)-- (0.03,-0.75);
\draw [line width=2.pt,color=ududff] (0.03,-0.75)-- (0.03,-0.625);
\draw [line width=2.pt,color=ududff] (0.03,-0.625)-- (0.015,-0.625);
\draw [line width=2.pt,color=ududff] (0.015,-0.875)-- (0.015,-0.75);
\draw [line width=2.pt,color=ududff] (0.015,-0.75)-- (0.03,-0.75);
\draw [line width=2.pt,color=ududff] (0.03,-0.75)-- (0.03,-0.875);
\draw [line width=2.pt,color=ududff] (0.03,-0.875)-- (0.015,-0.875);
\draw [line width=2.pt,color=ududff] (0.015,-1.)-- (0.015,-0.875);
\draw [line width=2.pt,color=ududff] (0.015,-0.875)-- (0.03,-0.875);
\draw [line width=2.pt,color=ududff] (0.03,-0.875)-- (0.03,-1.);
\draw [line width=2.pt,color=ududff] (0.03,-1.)-- (0.015,-1.);
\begin{scriptsize}
\draw [fill=ududff] (0.,0.) circle (1.0pt);
\draw[color=ududff] (-0.02,0.06) node {$O$};
\draw [fill=ududff] (1.,0.) circle (1.0pt);
\draw [fill=ududff] (0.5,0.) circle (1.0pt);
\draw [fill=ududff] (0.25,0.) circle (1.0pt);
\draw [fill=ududff] (0.13,0.) circle (1.0pt);
\draw [fill=ududff] (0.06,0.) circle (1.0pt);
\draw [fill=ududff] (0.03,0.) circle (1.0pt);
\draw [fill=ududff] (0.015,0.) circle (1.0pt);
\draw [fill=ududff] (0.5,0.5) circle (1.0pt);
\draw [fill=ududff] (0.5,1.) circle (1.0pt);
\draw [fill=ududff] (0.5,1.5) circle (1.0pt);
\draw [fill=ududff] (0.5,2.) circle (1.0pt);
\draw [fill=ududff] (0.5,-0.5) circle (1.0pt);
\draw [fill=ududff] (0.5,-1.) circle (1.0pt);
\draw [fill=ududff] (0.5,-1.5) circle (1.0pt);
\draw [fill=ududff] (0.5,-2.) circle (1.0pt);
\draw [fill=ududff] (0.25,0.5) circle (1.0pt);
\draw [fill=ududff] (0.25,1.) circle (1.0pt);
\draw [fill=ududff] (0.25,1.5) circle (1.0pt);
\draw [fill=ududff] (0.25,2.) circle (1.0pt);
\draw [fill=ududff] (0.25,-0.5) circle (1.0pt);
\draw [fill=ududff] (0.25,-1.) circle (1.0pt);
\draw [fill=ududff] (0.25,-1.5) circle (1.0pt);
\draw [fill=ududff] (0.13,0.5) circle (1.0pt);
\draw [fill=ududff] (0.13,1.) circle (1.0pt);
\draw [fill=ududff] (0.13,1.5) circle (1.0pt);
\draw [fill=ududff] (0.13,2.) circle (1.0pt);
\draw [fill=ududff] (0.13,-0.5) circle (1.0pt);
\draw [fill=ududff] (0.13,-1.) circle (1.0pt);
\draw [fill=ududff] (0.13,-1.5) circle (1.0pt);
\draw [fill=ududff] (0.13,-2.) circle (1.0pt);
\draw [fill=ududff] (0.06,0.5) circle (1.0pt);
\draw [fill=ududff] (0.06,1.) circle (1.0pt);
\draw [fill=ududff] (0.06,1.5) circle (1.0pt);
\draw [fill=ududff] (0.06,2.) circle (1.0pt);
\draw [fill=ududff] (0.06,-0.5) circle (1.0pt);
\draw [fill=ududff] (0.06,-1.) circle (1.0pt);
\draw [fill=ududff] (0.06,-1.5) circle (1.0pt);
\draw [fill=ududff] (0.06,-2.) circle (1.0pt);
\draw [fill=ududff] (0.25,-2.) circle (1.0pt);
\draw [fill=ududff] (0.13,1.75) circle (1.0pt);
\draw [fill=ududff] (0.13,1.25) circle (1.0pt);
\draw [fill=ududff] (0.13,0.75) circle (1.0pt);
\draw [fill=ududff] (0.13,0.25) circle (1.0pt);
\draw [fill=ududff] (0.13,-0.25) circle (1.0pt);
\draw [fill=ududff] (0.13,-0.75) circle (1.0pt);
\draw [fill=ududff] (0.13,-1.25) circle (1.0pt);
\draw [fill=ududff] (0.13,-1.75) circle (1.0pt);
\draw [fill=ududff] (0.06,-1.75) circle (1.0pt);
\draw [fill=ududff] (0.06,-1.25) circle (1.0pt);
\draw [fill=ududff] (0.06,-0.75) circle (1.0pt);
\draw [fill=ududff] (0.06,-0.25) circle (1.0pt);
\draw [fill=ududff] (0.06,0.25) circle (1.0pt);
\draw [fill=ududff] (0.06,0.75) circle (1.0pt);
\draw [fill=ududff] (0.06,1.25) circle (1.0pt);
\draw [fill=ududff] (0.06,1.75) circle (1.0pt);
\draw [fill=ududff] (0.03,0.5) circle (1.0pt);
\draw [fill=ududff] (0.03,1.) circle (1.0pt);
\draw [fill=ududff] (0.03,-1.) circle (1.0pt);
\draw [fill=ududff] (0.03,-0.5) circle (1.0pt);
\draw [fill=ududff] (0.015,1.) circle (1.0pt);
\draw [fill=ududff] (0.015,0.5) circle (1.0pt);
\draw [fill=ududff] (0.015,-1.) circle (1.0pt);
\draw [fill=ududff] (0.015,-0.5) circle (1.0pt);
\draw [fill=ududff] (0.015,-0.75) circle (1.0pt);
\draw [fill=ududff] (0.015,-0.25) circle (1.0pt);
\draw [fill=ududff] (0.015,0.25) circle (1.0pt);
\draw [fill=ududff] (0.015,0.75) circle (1.0pt);
\draw [fill=ududff] (0.03,0.75) circle (1.0pt);
\draw [fill=ududff] (0.03,0.25) circle (1.0pt);
\draw [fill=ududff] (0.03,-0.25) circle (1.0pt);
\draw [fill=ududff] (0.03,-0.75) circle (1.0pt);
\draw [fill=ududff] (0.03,0.875) circle (1.0pt);
\draw [fill=ududff] (0.03,0.625) circle (1.0pt);
\draw [fill=ududff] (0.03,0.375) circle (1.0pt);
\draw [fill=ududff] (0.03,0.125) circle (1.0pt);
\draw [fill=ududff] (0.03,-0.125) circle (1.0pt);
\draw [fill=ududff] (0.03,-0.375) circle (1.0pt);
\draw [fill=ududff] (0.03,-0.625) circle (1.0pt);
\draw [fill=ududff] (0.03,-0.875) circle (1.0pt);
\draw [fill=ududff] (0.015,-0.875) circle (1.0pt);
\draw [fill=ududff] (0.015,-0.625) circle (1.0pt);
\draw [fill=ududff] (0.015,-0.375) circle (1.0pt);
\draw [fill=ududff] (0.015,-0.125) circle (1.0pt);
\draw [fill=ududff] (0.015,0.125) circle (1.0pt);
\draw [fill=ududff] (0.015,0.375) circle (1.0pt);
\draw [fill=ududff] (0.015,0.625) circle (1.0pt);
\draw [fill=ududff] (0.015,0.875) circle (1.0pt);
\draw[color=ududff] (0.75,0.25) node {$L_0$};
\draw[color=ududff] (0.195,0.25) node {$L_1$};
\draw[color=ududff] (0.375,0.25) node {$I_{0,1}$};
\draw[color=ududff] (0.375,0.75) node {$I_{0,2}$};
\draw[color=ududff] (0.375,-0.25) node {$I_{0,0}$};

\draw[color=ududff] (0.1,-0.15642539785446855) node {$I_{1,0}$};
\draw[color=ududff] (0.1,0.08937157613029365) node {$I_{1,1}$};
\draw[color=ududff] (0.1,0.33516855011505586) node {$I_{1,2}$};
\draw[color=ududff] (0.1,0.5998729836371074) node {$I_{1,3}$};
\draw[color=ududff] (0.1,0.85) node {$I_{1,4}$};
\end{scriptsize}
\end{axis}
\end{tikzpicture}
\caption{}\label{F:2}

\end{figure}

Let  $I=[0,1]^2$, $L=[0,1]\times\mathbb{R}$   as in Lemmas \ref{L:main1} and  \ref{L:main2}, and for any $n\in\mathbb{N}$ and $k\in\mathbb{Z}$ let
\[r_{n}\colon L_n \to L \quad \text{ and } \quad r_{n,k}\colon I_{n,k}\to I,\]
be the  homeomorphisms which are given by the formulas:
\begin{eqnarray}
r_0(x,y)&=&\big(2x-1,y)\\
r_n(x,y)&=&\big(\frac{4^{n+1}}{2}\cdot x-1,
(n+2-\frac{4^{n+1}}{2}x)y\big), \text{ if } n>0\label{EQ:Distortion}\\
r_{n,k}(x,y)&=&\big(4^{n+1}\cdot(x-\frac{1}{4^{n+1}}),
(n+1)(y-\frac{k-1}{n+1})    \big).\label{EQ:Translation}
\end{eqnarray}
In other words, $r_n$  maps the $x$-interval $[2/4^{n+1},1/4^{n}]$ affinely onto $[0,1]$
 and, for $n>0$, it interpolates between matching the $y$-coordinates $\frac{1}{n+1}\cdot\mathbb{Z}$ of the left boundary of $L_n$ with $\mathbb{Z}$ and the $y$-coordinates $\frac{1}{n}\cdot\mathbb{Z}$ of the right boundary of $L_n$ with $\mathbb{Z}$. Similarly $r_{n,k}$  translates  $I_{n,k}$ vertically to $I_{n,1}$ and then post-composes with  the obvious homeomorphism  $I_{n,1}\xrightarrow{\raisebox{-0.5ex}[0ex][0ex]{$\sim$}} I$.

For each  $n\in\mathbb{N}$, let $\mathcal{B}_n:= r_n^*\mathcal{B}_L$ of $L_n$ be the deballing recipe of $L_n$ that is the pullback of $\mathcal{B}_L$, from Lemma \ref{L:main2}, under $r_n$. Similarly, for every $n\in\mathbb{N}$ and $k\in\mathbb{Z}$, let  $\mathcal{B}_{n,k}:= r_{n,k}^*\mathcal{B}_I$ be the deballing recipe of $I_{n,k}$ which is the pullback of $\mathcal{B}_I$ from Lemma \ref{L:main1} under $r_{n,k}$. Set
\[\mathcal{B}_K:=\big(\bigcup_n\mathcal{B}_n\big)\cup\big(\bigcup_{n,k}\mathcal{B}_{n,k}\big)\]
be the associated deballing recipe on $K$.

Finally, we let $J$ be the subset of the plane that is the union of the right half-disc $D:=\{(x,y)\in \mathbb{R}^2\colon |x|^2+|y|^2\leq 1 \text{ and } x\geq 0\}$ and the squares $I_{+}:=[-1,0]\times [0,1]$ and $I_{-}:=[-1,0]\times [-1,0]$, as in Figure \ref{F:D}. In what follows, we will identify $D$ with the two-point compactification $\widehat{K}$ of $K$ under the   homeomorphism $\kappa\colon D\to \widehat{K}$ from (\ref{EQ:compactification}). Similarly we will identify $I_{+}$ and $I_{-}$ with the  $I:=[0,1]^2$ via the translation maps $t_+\colon I_{+}\to I$ and $t_-\colon I_{-}\to I$ given by $t_{+}(x,y)=(x+1,y)$ and $t_{-}(x,y)=(x+1,y+1)$. Consider the deballing recipe 
\[\mathcal{B}_J:= \mathcal{B}_{+}\cup \mathcal{B}_{-}\cup\mathcal{B}_{K}\]
on $J$ where $\mathcal{B}_{+}, \mathcal{B}_{-}$ are pullbacks of $\mathcal{B}_I$ from Lemma \ref{L:main1} under $t_{+}$ and $t_{-}$, and set 
\begin{equation}
\mathcal{S}:=\mathcal{B}_J(J).
\end{equation}
By Theorem \ref{T:whyburn}, $\mathcal{S}$ is homeomorphic to the Sierpinski carpet. Having fixed the identification  $\kappa\colon D\to \widehat{K}$, below we will not be distinguishing between $D\setminus\{(0,-1),(0,+1)\}$ and $K$.

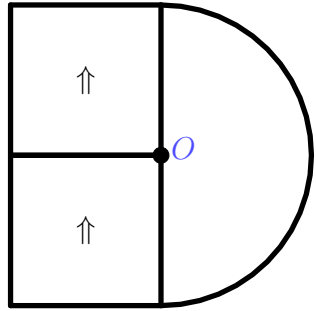
\begin{figure}[ht!]
\centering
\begin{tikzpicture}[line cap=round,line join=round,>=triangle 45,x=1cm,y=1cm]
\draw [shift={(0,0)},line width=2pt]  plot[domain=-1.5707963267948966:1.5707963267948966,variable=\t]({1*2*cos(\t r)+0*2*sin(\t r)},{0*2*cos(\t r)+1*2*sin(\t r)});
\draw [line width=2pt] (0,2)-- (-2,2);
\draw [line width=2pt] (-2,2)-- (-2,0);
\draw [line width=2pt] (-2,0)-- (0,0);
\draw [line width=2pt] (0,0)-- (0,2);
\draw (-1,1) node {$\Uparrow$};
\draw (-1,-1) node {$\Uparrow$};
\draw [line width=2pt] (-2,0)-- (-2,-2);
\draw [line width=2pt] (-2,-2)-- (0,-2);
\draw [line width=2pt] (0,-2)-- (0,0);
\draw [line width=2pt] (0,0)-- (-2,0);
\draw [fill=black] (0,0) circle (3pt);
\draw[color=ududff] (+0.3,0.1) node {$O$};
\end{tikzpicture}
\caption{A dynamical realization of $\mathcal{S}$}\label{F:D}
\end{figure}

Using this ``dynamical realization" of the Sierpinski carpet we may  define the desired Borel reduction 
$\rho\colon G\to \mathcal{H}(\mathcal{S})$ as follows: for any $g\in G$ let $\rho(g)\in\mathcal{H}(\mathcal{S})$ with

\[\rho(g)(z)=
\begin{cases}
\big(t_+^{-1} \circ \pi \circ t_+\big) (z) & \text{ if } z\in I_+  \\
\big(t_{-}^{-1} \circ \pi \circ t_{-} \big)(z) & \text{ if }  z\in I_{-}  \\
\tau(g)(z) & \text{ if }  z\in K  \\
\end{cases}
\]
where $\tau(g)$ is the homeomorphism of $K$  given by sending $z\in K$ to
\[\tau(g)(z)=
\begin{cases}
z  & \text{ if } z\in L_n \text{ for some } n\in\mathbb{N}  \\
\big(r_{n,k}^{-1} \circ \pi^{-1} \circ r_{n,k} \big)(z) & \text{ if }  z\in I_{n,k} \text{ for }   n\in\mathbb{N}, k\in\mathbb{Z} \text{ with } g(n)=k \\
\big(r_{n,k}^{-1} \circ \pi \circ r_{n,k} \big)(z) & \text{ if }  z\in I_{n,k} \text{ for }   n\in\mathbb{N}, k\in\mathbb{Z} \text{ with } g(n)\neq k. 
\end{cases}
\]
Here $\pi\in\mathcal{H}(I)$ is as in Lemma \ref{L:main1}. In particular, Lemma \ref{L:main1}(1)  guarantees that both $\tau$ and $\rho$ are well-defined homeomorphisms of $K$ and $J$ respectively. It is straightforward that $\rho\colon G \to \mathcal{H}(\mathcal{S})$ is continuous. Next we show that $\rho$ is a reduction.

It is not difficult to see that if $\varphi,\varphi'\in\mathcal{H}(K)$ are any homeomorphisms in the image of $\rho$, so  that $\varphi' =\sigma \varphi \sigma^{-1}$ holds for some $\sigma\in \mathcal{H}(K)$, then $\sigma$ has to  fix $K$ setwise and  has to map the point $O=(0,0)$ to itself: $\{\sigma(z) \colon z\in K\}=K$ and $\sigma(O)=O$.  This follows from the unique behavior of homeomorphisms of the form $\rho(g)$ on $I_{+},I_{-}$, as well as on a neighborhood of any point of the semicircle $\{(x,y)\in \partial D \colon x>0\}$.

 In particular,  in order  to establish Proposition \ref{Prop:reduction}, it suffices to prove that for all $g,g'\in G$ we have that $gE_{\widehat{G}}g'$ holds if and only if there exists  $\sigma\in\mathcal{H}(K)$ with $\sigma(O)=O$ so that $ \tau(g)= \sigma \circ \tau(g') \circ \sigma^{-1}$. This is precisely the content of the next two claims.

\begin{claim}
If $g,g'\in G$ with $gE_{\widehat{G}}g'$, then there  exists  $\sigma\in\mathcal{H}(K)$ with $\sigma(O)=O$ and \[\tau(g)= \sigma \circ \tau (g') \circ \sigma^{-1}.\]
\end{claim}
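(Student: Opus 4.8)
The plan is to construct $\sigma$ by hand: it will shift each column of squares vertically so as to realign the ``special'' square of $\tau(g')$ with that of $\tau(g)$, and absorb the resulting mismatch inside the strips $L_n$ using the controlled homeomorphisms of Lemma \ref{L:main2}. Set $d_n := g(n)-g'(n)$ for $n\in\bN$ and $d_{-1}:=0$. Since $gE_{\widehat G}g'$, we have $d_n/(n+1)\to 0$; in particular $M:=\sup_n |d_n|/(n+1)<\infty$ and $d_n/n\to 0$. Define $\sigma$ piecewise as follows. On the column $\bigcup_{k\in\bZ} I_{n,k}$, let $\sigma$ be the vertical translation $(x,y)\mapsto(x,y+d_n/(n+1))$; equivalently $\sigma\res I_{n,k}=r_{n,k+d_n}^{-1}\circ r_{n,k}$, which carries $I_{n,k}$ onto $I_{n,k+d_n}$ and the recipe $\mathcal B_{n,k}$ onto $\mathcal B_{n,k+d_n}$, hence restricts to a homeomorphism of the carpet of that column. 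On $L_n$, let $\sigma\res L_n:=r_n^{-1}\circ h^{\mathcal B}_{d_n,\,d_{n-1}}\circ r_n$, a homeomorphism of $\mathcal B_n(L_n)$. Finally put $\sigma(z)=z$ on $\{0\}\times\bR$.

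First I would check the pieces glue. Using the affine formulas for $r_n$ and $r_{n,k}$ one finds that $r_n$ carries the left edge of $L_n$ onto the left edge of $L$ by $y\mapsto(n+1)y$ and the right edge of $L_n$ onto the right edge of $L$ by $y\mapsto ny$; since $h^{\mathcal B}_{d_n,d_{n-1}}$ extends $h^{\partial}_{d_n,d_{n-1}}$, it follows that $\sigma\res L_n$ acts on the left edge of $L_n$ as $y\mapsto y+d_n/(n+1)$ and on its right edge as $y\mapsto y+d_{n-1}/n$. These agree with the vertical shifts by which $\sigma$ acts on the right edge of column $n$ and on the left edge of column $n-1$, respectively (for $n=0$ the right edge of $L_0$ is fixed, matching $\ell=0$). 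Hence $\sigma$ is a well-defined bijection which permutes the pieces of the decomposition: it maps each $L_n$ onto itself, and each column onto itself via $I_{n,k}\mapsto I_{n,k+d_n}$.

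Next I would verify $\tau(g)=\sigma\circ\tau(g')\circ\sigma^{-1}$ piecewise. On each $L_n$ and on $\{0\}\times\bR$ both sides are the identity, using only that $\sigma$ maps $L_n$ onto $L_n$ and fixes $\{0\}\times\bR$. On $I_{n,k}$, since $\sigma^{-1}\res I_{n,k}=r_{n,k-d_n}^{-1}\circ r_{n,k}$ maps $I_{n,k}$ onto $I_{n,k-d_n}$, a direct computation in which the factors $r_{n,k-d_n}$ cancel gives $\sigma\circ\tau(g')\circ\sigma^{-1}\res I_{n,k}=r_{n,k}^{-1}\circ\pi^{\varepsilon}\circ r_{n,k}$ with $\varepsilon=-1$ if $k-d_n=g'(n)$ and $\varepsilon=1$ otherwise. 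As $k-d_n=g'(n)\iff k=g(n)$, this is exactly $\tau(g)\res I_{n,k}$.

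The crux of the argument --- and the only place where the hypothesis $gE_{\widehat G}g'$ is actually used --- is to show that $\sigma$ is a homeomorphism, equivalently (since $\sigma^{-1}$ is given by the same recipe with $d_n$ replaced by $-d_n$ and the maps of Lemma \ref{L:main2} replaced by their inverses, which still satisfy a bound of the same form) that $\sigma$ is continuous, and in particular continuous along $\{0\}\times\bR$; this will also force $\sigma(O)=O$. Away from $\{0\}\times\bR$ and $\pm\infty$, $\sigma$ is locally one of the carpet homeomorphisms above, so continuity is clear. Near a point $(0,y_0)$: on column $n$ the displacement is exactly $d_n/(n+1)\to0$, while on $L_n$ (with $n\ge1$) a point $(x,y)$ with $|y|\le B$ is sent by $r_n$ to $(x',cy)$ with $c\in[n,n+1]$; by Lemma \ref{L:main2} the second coordinate then moves by at most $\max\{|d_n|,|d_{n-1}|\}+3$, and $r_n^{-1}$ divides once more by a factor in $[n,n+1]$, so that the total displacement of $(x,y)$ is bounded by $\tfrac{(n+1)B}{n^2}+\tfrac{\max\{|d_n|,|d_{n-1}|\}+3}{n}$, which tends to $0$ as $n\to\infty$ because $d_n/n\to0$. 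Since the $x$-coordinate of every point of $L_n$ or column $n$ also tends to $0$ as $n\to\infty$, we get $\sigma(z)\to(0,y_0)$ as $z\to(0,y_0)$, so $\sigma$ is continuous at $(0,y_0)$ and fixes it; in particular $\sigma(O)=O$. Continuity at $\pm\infty$ is checked in the same way, now using $M<\infty$ to bound the displacements uniformly in $n$. I expect this final reconciliation of the unbounded rescalings built into the $r_n$ with the bounded distortion supplied by Lemma \ref{L:main2} to be the only genuine difficulty; everything else is bookkeeping.
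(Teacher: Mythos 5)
Your proposal is correct and follows essentially the same route as the paper: you define the identical $\sigma$ (vertical shift by $(g(n)-g'(n))/(n+1)$ on the columns, $r_n^{-1}\circ h^{\mathcal B}_{s(n),s(n-1)}\circ r_n$ on the strips $L_n$, identity on $\{0\}\times\bR$), verify the same boundary matching, and reduce everything to continuity along $\{0\}\times\bR$ via the bounded-distortion estimate of Lemma \ref{L:main2}. Your continuity estimate is a direct computation with the scaling factors in $[n,n+1]$ rather than the paper's trapezoid-containment picture, but it is the same argument in substance.
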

\begin{proof}
Let $g,g'\in G$ with $gE_{\widehat{G}}g'$. Then $g-g' \in \widehat{G}$. Let $s\colon \mathbb{N}\cup\{-1\}\to\mathbb{Z}$ by setting  $s(-1)=0$ and  $s(n)=g(n)-g'(n)$, if $n\in\mathbb{N}$.
Define $\sigma\colon K\to K$ by setting
\[\sigma(x,y)=
\begin{cases}
(x,y)+(0,\frac{s(n)}{n+1})  &(x,y)\in I_{n,k} \text{ for some }   n\in\mathbb{N}, k\in\mathbb{Z}, \\
(x,y) & \text{ if } x=0, \\
\big(r^{-1}_n\circ   h^{\mathcal{B}}_{s(n),s(n-1)} \circ r_n\big)
(x,y) & \text{ if }  \text{ if } (x,y)\in L_n \text{ for some } n\in\mathbb{N}, 
\end{cases}
\]
where $r_n$ is as in (\ref{EQ:Distortion})  and $ h^{\mathcal{B}}_{k,\ell}$ as in Lemma \ref{L:main2}. To see that $\sigma$ is a well-defined function, one just checks that  for all $y\in\mathbb{R}$ and all $x\in\{1/(2\cdot 4^n) \colon n\in\mathbb{N}\}$ we have that
\[\big(r^{-1}_n\circ   h^{\mathcal{B}}_{s(n),s(n-1)} \circ r_n\big)
(x,y)=(x,y)+(0,\frac{s(n)}{n+1});\]
and that for all $y\in\mathbb{R}$ and all $x\in\{1/4^n \colon n\in\mathbb{N}\}$ we have that
\[\big(r^{-1}_n\circ   h^{\mathcal{B}}_{s(n),s(n-1)} \circ r_n\big)
(x,y)=(x,y)+(0,\frac{s(n-1)}{n}).\]
It is  now clear that $\sigma(O)=O$ holds and it is also   easy to check that for all $(x,y)\in K$ we have that  $\tau(g)(x,y)= \big(\sigma \circ \tau(g') \circ \sigma^{-1}\big)(x,y)$. Therefore, we are just left with showing that  $\sigma$ is a homeomorphism of $K$.
Notice that $\sigma$ restricts to a self-homeomorphism of $L_n$ and to  a self-homeomorphism of  $\bigcup_kI_{n,k}$, for all $n\in\mathbb{N}$. Hence, it suffices to show that that $\sigma$ is continuous on points of the form $(0,y)$ with $y\in \mathbb{R}$. 
To see this, set $(\tilde{x},\tilde{y}):=\sigma(x,y)$ and fix some $\varepsilon>0$ and  $M>0$. We will find  $n_0\in\mathbb{N}$ so that:
\begin{enumerate}
\item[(i)] If $(x,y)\in I_{n,k}$ for some $n>n_0$ and $k\in\mathbb{Z}$, then $|x-\tilde{x}|<\varepsilon$ and $|y-\tilde{y}|<\varepsilon$;
\item[(ii)] If $(x,y)\in L_n$  for some $n>n_0$ and $|y|<M$,  then $|x-\tilde{x}|<\varepsilon$ and $|y-\tilde{y}|<\varepsilon$.
\end{enumerate}

For (i), notice that if $(x,y)\in I_{n,k}$, then  $|x-\tilde{x}|\leq 1/4^{n+1}$ and   $|y-\tilde{y}|\leq |s(n)/(n+1)|$. Since $\sigma(n)=g(n)-g'(n)$ and $g-g'\in \widehat{G}$, we get 
some $n_0\in\mathbb{N}$ which  satisfies (i).
\smallskip{}

For (ii), let $(x,y)\in L_n$ and notice that $|x-\tilde{x}|\leq 1/(2\cdot 4^{n+1})$. So it suffices to show that $|y-\tilde{y}|\to 0$ also holds, when $n\to \infty$.
It will be  convenient to assume $y\geq 0$. A  symmetric argument similarly establishes the case $y\leq 0$. 

Since $0\leq y < M$ holds, we may pick some $k\in \mathbb{N}$ with $1\leq k<M/(n+1)$ so that $(x,y)$ lies in the trapezoid $T$ whose endpoints are
\begin{eqnarray*}
(\frac{1}{4^{n+1}}, \frac{k}{n+1}) &\quad &  (\frac{1}{4^{n}}, \frac{k}{n})\\
(\frac{1}{4^{n+1}}, \frac{k-1}{n+1}) &\quad & (\frac{1}{4^{n}}, \frac{k-1}{n})
\end{eqnarray*}
By Lemma \ref{L:main2} we have that $\sigma(T)$ is  contained in the  trapezoid $\widetilde{T}$ whose endpoints are:
\begin{eqnarray*}
(\frac{1}{4^{n+1}}, \frac{\max{\{s(n-1),s(n)\}}+k+1}{n+1}) &\quad & 
(\frac{1}{4^{n}}, \frac{\max{\{s(n-1),s(n)}+k+1\}}{n})\\
(\frac{1}{4^{n+1}}, \frac{\min{\{s(n-1),s(n)\}}+k-1}{n+1}) &\quad & (\frac{1}{4^{n}}, \frac{\min{\{s(n-1),s(n)\}}+k-1}{n})
\end{eqnarray*}

\begin{figure}[ht!]
\definecolor{ttttff}{rgb}{0.2,0.2,1}
\definecolor{zzttqq}{rgb}{0.6,0.2,0}
\centering
\begin{tikzpicture}[line cap=round,line join=round,>=triangle 45,x=1cm,y=1cm,yscale=0.5,xscale=1.2]
\fill[line width=2pt,color=zzttqq,fill=zzttqq,fill opacity=0.10000000149011612] (-3,1) -- (3,2) -- (3,4) -- (-3,2) -- cycle;
\fill[line width=2pt,color=ttttff,fill=ttttff,fill opacity=0.1] (-3,3) -- (3,6) -- (3,14) -- (-3,7) -- cycle;
\fill[line width=2pt,color=zzttqq,fill=zzttqq,fill opacity=0.10000000149011612] (-3,5) -- (-1.6945086685443915,6.818213571894482) -- (-0.696068320041519,8.303931679958481) -- (-0.3804248473169871,9.626392696709209) -- (0.6456406021345479,11.030482259116571) -- (3,12) -- (3,10) -- (2.6617692045656343,8.7623375813816) -- (2.355749684553773,7.790275576638042) -- (2.1937393504298464,6.800212423658491) -- (1.419689976282197,6.2421768283427435) -- (0.5736360091905806,5.234112527127201) -- (-0.43442829202496264,5.378121713015136) -- (-2.1625385226801797,5.0541010447672825) -- (-3,4) -- cycle;
\draw [line width=1pt] (-3,0)-- (3,0);
\draw [line width=1pt] (-3,1)-- (3,2);
\draw [line width=1pt] (-3,2)-- (3,4);
\draw [line width=1pt] (-3,3)-- (3,6);
\draw [line width=1pt] (-3,4)-- (3,8);
\draw [line width=1pt] (-3,5)-- (3,10);
\draw [line width=1pt] (-3,6)-- (3,12);
\draw [line width=1pt] (-3,7)-- (3,14);
\draw [line width=1pt] (-3,0)-- (-3,14);
\draw [line width=1pt] (3,0)-- (3,14);
\draw [line width=2pt,color=zzttqq] (-3,1)-- (3,2);
\draw [line width=2pt,color=zzttqq] (3,2)-- (3,4);
\draw [line width=2pt,color=zzttqq] (3,4)-- (-3,2);
\draw [line width=2pt,color=zzttqq] (-3,2)-- (-3,1);
\draw [line width=2pt,color=ttttff] (-3,3)-- (3,6);
\draw [line width=2pt,color=ttttff] (3,6)-- (3,14);
\draw [line width=2pt,color=ttttff] (3,14)-- (-3,7);
\draw [line width=2pt,color=ttttff] (-3,7)-- (-3,3);
\draw [line width=2pt,color=zzttqq] (-3,5)-- (-1.6945086685443915,6.818213571894482);
\draw [line width=2pt,color=zzttqq] (-1.6945086685443915,6.818213571894482)-- (-0.696068320041519,8.303931679958481);
\draw [line width=2pt,color=zzttqq] (-0.696068320041519,8.303931679958481)-- (-0.3804248473169871,9.626392696709209);
\draw [line width=2pt,color=zzttqq] (-0.3804248473169871,9.626392696709209)-- (0.6456406021345479,11.030482259116571);
\draw [line width=2pt,color=zzttqq] (0.6456406021345479,11.030482259116571)-- (3,12);
\draw [line width=2pt,color=zzttqq] (3,12)-- (3,10);
\draw [line width=2pt,color=zzttqq] (3,10)-- (2.6617692045656343,8.7623375813816);
\draw [line width=2pt,color=zzttqq] (2.6617692045656343,8.7623375813816)-- (2.355749684553773,7.790275576638042);
\draw [line width=2pt,color=zzttqq] (2.355749684553773,7.790275576638042)-- (2.1937393504298464,6.800212423658491);
\draw [line width=2pt,color=zzttqq] (2.1937393504298464,6.800212423658491)-- (1.419689976282197,6.2421768283427435);
\draw [line width=2pt,color=zzttqq] (1.419689976282197,6.2421768283427435)-- (0.5736360091905806,5.234112527127201);
\draw [line width=2pt,color=zzttqq] (0.5736360091905806,5.234112527127201)-- (-0.43442829202496264,5.378121713015136);
\draw [line width=2pt,color=zzttqq] (-0.43442829202496264,5.378121713015136)-- (-2.1625385226801797,5.0541010447672825);
\draw [line width=2pt,color=zzttqq] (-2.1625385226801797,5.0541010447672825)-- (-3,4);
\draw [line width=2pt,color=zzttqq] (-3,4)-- (-3,5);
\begin{scriptsize}
\draw [fill=black] (3,0) circle (1.5pt);
\draw [fill=black] (3,2) circle (1.5pt);
\draw [fill=black] (3,4) circle (1.5pt);
\draw [fill=black] (3,6) circle (1.5pt);
\draw [fill=black] (3,8) circle (1.5pt);
\draw [fill=black] (3,10) circle (1.5pt);
\draw [fill=black] (3,12) circle (1.5pt);
\draw [fill=black] (3,14) circle (1.5pt);
\draw [fill=black] (-3,0) circle (1.5pt);
\draw [fill=black] (-3,1) circle (1.5pt);
\draw [fill=black] (-3,2) circle (1.5pt);
\draw [fill=black] (-3,3) circle (1.5pt);
\draw [fill=black] (-3,4) circle (1.5pt);
\draw [fill=black] (-3,5) circle (1.5pt);
\draw [fill=black] (-3,6) circle (1.5pt);
\draw [fill=black] (-3,7) circle (1.5pt);
\end{scriptsize}
\end{tikzpicture}
\caption{The sets $T, \widetilde{T}\subseteq L_n$ and  $\sigma(T)\subseteq \widetilde{T}$, if $\sigma(n)=3$ and $\sigma(n-1)=4$.}
\end{figure}
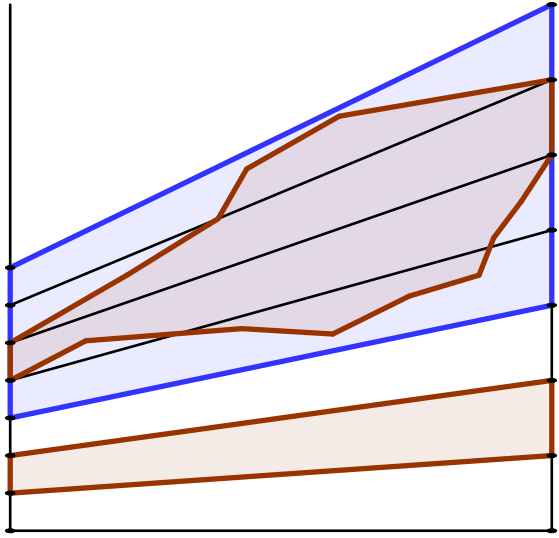

As a consequence, we have that $|y-\tilde{y}|\leq|p-q|$, where  $p,q$ correspond to the maximum and minimum value among the numbers in the following list,  where $s\in \{s(n-1), s(n)\}$:
\[\frac{k-1}{n+1},\frac{k}{n+1}, \frac{k-1}{n}, \frac{k}{n},  \frac{s+k+1}{n+1}, \frac{s+k-1}{n+1}, \frac{s+k+1}{n}, \frac{s+k-1}{n},  \] 
By triangle inequality, $|p-q|$ is bounded above by the sum of at most six terms among:
\[\frac{1}{n+1}, \frac{1}{n}, \frac{M}{(n+1)^2}, \frac{M}{n(n+1)}, \frac{|s(n)|}{n+1}, \frac{|s(n)|}{n}, \frac{|s(n-1)|}{n+1}, \frac{|s(n-1)|}{n}  \]
Since all these terms are independent of the choice of $y<M$, and  converge to $0$ as $n\to\infty$, we get $n_0\in\mathbb{N}$ so that $|y-\tilde{y}|<\varepsilon$ holds
for all  $(x,y)\in L_n$, with $n>n_0$ and  $|y|<M$.
\end{proof}  

\bigskip

\begin{claim}
If  $g,g'\in G$  with  $\tau(g)= \sigma \circ \tau(g')   \circ \sigma^{-1}$, for some  $\sigma\in\mathcal{H}(K)$ with $\sigma(O)=O$, then 
\[gE_{\widehat{G}}g'.\]
\end{claim}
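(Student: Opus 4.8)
The plan is to recover, from an arbitrary homeomorphism $\sigma$ with $\sigma(O)=O$ and $\tau(g)=\sigma\circ\tau(g')\circ\sigma^{-1}$, a sequence of integers $c_n$ that $\sigma$ is forced to implement column-by-column, to identify each $c_n$ with $g(n)-g'(n)$, and then to deduce $g-g'\in\widehat{G}$ from the continuity of $\sigma$ at $O$. The first point is that the cellular decomposition underlying the definition of $\tau$ is intrinsic to the dynamics. Since $\pi$ fixes $\partial I$ pointwise and no other point (Lemma~\ref{L:main1}), one checks that $\mathrm{Fix}(\tau(g))=\mathrm{Fix}(\tau(g'))$ equals the $g$-independent set $F:=K\setminus\bigcup_{n,k}(I_{n,k}\setminus\partial I_{n,k})$, where $I_{n,k}$ is the deballed carpet cell and $\partial I_{n,k}$ its outer boundary curve, and that the connected components of $K\setminus F$ are exactly the sets $I_{n,k}\setminus\partial I_{n,k}$ (here one uses that a Sierpi\'nski carpet with its outer peripheral circle removed is still connected). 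Consequently $\sigma$ permutes the family of cells $\{I_{n,k}\}$ (the closures of these components).

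Two distinct cells meet precisely when they are $I_{n,k}$ and $I_{n,k\pm1}$, so the connected components of the cell-intersection graph are exactly the columns $\mathrm{Col}_n:=\bigcup_k I_{n,k}$; hence $\sigma$ induces a bijection $\phi$ of the columns and, restricted to each $\mathrm{Col}_n$, an isomorphism of $\mathbb{Z}$-lines of the form $k\mapsto\varepsilon_n k+c_n$ with $\varepsilon_n\in\{\pm1\}$ and $c_n\in\mathbb{Z}$. To see $\phi=\mathrm{id}$, note that $\mathrm{Col}_n$ separates $\bigcup_{m>n}\mathrm{Col}_m$ from $\bigcup_{m<n}\mathrm{Col}_m$ inside $K$ (these lie respectively in $\{x<1/4^{n+1}\}\cap K$ and $\{x>2/4^{n+1}\}\cap K$, which are relatively clopen in $K\setminus\mathrm{Col}_n$); since $\sigma$ preserves this betweenness relation on $\{\mathrm{Col}_n\}$, and a betweenness-preserving permutation of $\mathbb{N}$ must be monotone and therefore the identity, we get $\sigma(\mathrm{Col}_n)=\mathrm{Col}_n$ for all $n$.

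To pin down $\varepsilon_n$ and $c_n$, I would use the time-asymmetry of $\pi$. For a cell $C$, the subset of $\partial C$ consisting of forward limits of $\tau(g)$-orbits of non-fixed points of $C$ is a conjugacy invariant; by Lemma~\ref{L:main1}, and since deballing recipes avoid $\partial I$, this set is the relative interior of the top edge of $I_{n,k}$ when $g(n)\ne k$, and of its bottom edge when $g(n)=k$. Within a column an order-preserving cell-isomorphism carries top edges to top edges and an order-reversing one carries top edges to bottom edges; but in each column exactly one cell has this invariant equal to its bottom edge while cofinitely many have it equal to the top edge, so the order-reversing case $\varepsilon_n=-1$ cannot occur. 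Thus $\varepsilon_n=+1$, $\sigma$ maps $I_{n,k}$ onto $I_{n,k+c_n}$ preserving top and bottom, and comparing the invariants yields $g'(n)\ne k\iff g(n)\ne k+c_n$ for every $k$, whence $c_n=g(n)-g'(n)$.

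Finally, the cell $I_{n,0}=[1/4^{n+1},2/4^{n+1}]\times[-1/(n+1),0]$ is compact and converges in the Hausdorff metric to $\{O\}$ as $n\to\infty$; since $\sigma$ is continuous at $O$ and $\sigma(O)=O$, so does $\sigma(I_{n,0})=I_{n,c_n}=[1/4^{n+1},2/4^{n+1}]\times[(c_n-1)/(n+1),\,c_n/(n+1)]$, and inspecting its $y$-coordinates forces $c_n/(n+1)\to0$, i.e.\ $(g(n)-g'(n))/(n+1)\to0$, which is exactly $g-g'\in\widehat{G}$ and hence $gE_{\widehat{G}}g'$. I expect the crux to be the rigidity statements of the middle two paragraphs — that $\sigma$ can neither permute the columns nor flip any column upside down — since a priori $\sigma$ is merely \emph{some} self-homeomorphism of $K$ and this must be ruled out purely from the combinatorics of the configuration together with the asymmetry of $\pi$; the continuity argument at $O$ is then essentially immediate.
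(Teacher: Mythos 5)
Your proposal is correct and follows essentially the same route as the paper's proof: the cells $\mathcal{B}_K(I_{n,k})$ are recovered as (closures of) components of the non-fixed locus of $\tau(g)$, the dynamically reversed cell is singled out by the forward-limit invariant so that $\sigma$ must shift column $n$ by $g(n)-g'(n)$, and continuity at $O$ together with the Hausdorff convergence $I_{n,0}\to\{O\}$ finishes the argument. If anything, you supply two details the paper leaves implicit --- the separation argument showing $\sigma$ cannot permute the columns, and the top/bottom-edge argument ruling out an orientation reversal within a column, which is genuinely needed to pass from $\sigma(\mathcal{B}_K(I_{n,g'(n)}))=\mathcal{B}_K(I_{n,g(n)})$ to $\sigma(\mathcal{B}_K(I_{n,g'(n)+m}))=\mathcal{B}_K(I_{n,g(n)+m})$ for all $m$.
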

\begin{proof} Suppose we are given $g,g' \in G$ and $\sigma\in\mathcal{H}(K)$ satisfying $\sigma(O)=O$ such that $\tau(g)= \sigma \circ \tau(g') \circ \sigma^{-1}$. By the structure of the homeomorphisms lying in $\tau(G)$ and a simple induction on $n\in\mathbb{N}$ we have that $\sigma$ restricts to a self-homeomorphism of each $L_n$ and to a self-homeomorphism of $\bigcup_k I_{n,k}$ for each $n \in \N$. Moreover,
since each $\mathcal{B}_{K}(I_{n,k})\setminus \partial(I_{n,k})$ is a maximal connected open subset of $K$ on which both $\tau(g)$ and $\tau(g')$ have no fixed points, for every $n\in \mathbb{N}$ and every $k\in\mathbb{Z}$ there is $\ell\in\mathbb{Z}$ so that 
\[\sigma(\mathcal{B}_{K}(I_{n,k})) = \mathcal{B}_{K}(I_{n,\ell}).\]
Notice  that we may  characterize $\mathcal{B}_{K}(I_{n,g'(n)})$ inside the family $\set{\mathcal{B}_{K}(I_{n,k}) \mid k \in \bZ}$  as the member indexed by the unique $k$ for which there exists  some $m\in\mathbb{Z}$ with  $m\neq k$, so that 
\[ \set{\lim_{i\to+\infty}\tau(g)^i(z) \mid z \in \mathcal{B}_{K}(I_{n,k})\setminus \del(I_{n,k})} = \set{\lim_{i\to+\infty}\tau(g)^i(z) \mid z \in \mathcal{B}_{K}(I_{n,m})\setminus \del(I_{n,m})}. \]
Similarly we may characterize $\mathcal{B}_{K}(I_{n,g(n)})$ inside the family $\set{\mathcal{B}_{K}(I_{n,\ell}) \mid \ell \in \bZ}$. Since these characterizations are topological, they have to be respected by the intertwiner $\sigma$.  That is 
\[\sigma(\mathcal{B}_{K}(I_{n,g'(n)})) = \mathcal{B}_{K}(I_{n,g(n)})\]
should hold for all $n\in\mathbb{N}$. It follows that for all $n\in\mathbb{N}$ and  $m\in\mathbb{Z}$ we have that:
\begin{equation}
\sigma(\mathcal{B}_{K}(I_{n,g'(n)+m})) = \mathcal{B}_{K}(I_{n,g(n)+m}).
\end{equation}
Setting  $m:=-g'(n)$ for each $n\in\mathbb{N}$ and recalling that  $\sigma(O)=O$, we have that
\[ \lim_{n\to \infty} d(\mathcal{B}_{K}(I_{n,0}),\hspace{0.5mm}\mathcal{B}_{K}(I_{n,g(n)-g'(n)})) = 0.\]
It follows that
\[ \frac{g(n)-g'(n)}{n+1} \to 0,\]
which amounts to asserting that $g-g' \in \widehat{G}$. That is, $gE_{\widehat{G}}g'$ holds.

\end{proof}

The proof of Theorem \ref{T:Main} is now complete, assuming Lemmas \ref{L:main1} and  \ref{L:main2}. We prove these lemmas in the subsequent Sections \ref{SL:main1} and  \ref{SL:main2}, respectively.

\section{Proof of Lemma \ref{L:main1}}\label{SL:main1}
It will be convenient to work with  $J:=[-1,1]^2$ instead of $[0,1]^2$. We will define
a deballing recipe $\mathcal{B}$ of $J$ and   $\pi\in\mathcal{H}\big(\mathcal{B}(J)\big)$ 
which fixes $\partial J$ pointwise and for  all $z=(x,y)\in\mathcal{B}(J) \setminus \partial J$,
\[\lim_{n\to-\infty}\pi^n(z)=(x,0)\; \text{ and  }\; \lim_{n\to\infty}\pi^n(z)=(x,1).\]

\begin{proof}[Proof of Lemma \ref{L:main1}]
For every $k\in\mathbb{N}$ we define a sequence  $(s^k_n)_{n\in\mathbb{N}}$ in $[0,1]$ as follows. When $k=0$, we let $s^0_0=0$ and  $s^0_n=1/2^n$ for all $n>0$. Assuming now that $(s^k_n)_{n\in\mathbb{N}}$ has been defined for some $k\in\mathbb{N}$, we let $s^{k+1}_0=0$ and $s^{k+1}_{2n-1}=s^{k+1}_{2n}=s^k_n/2$  for all $n>0$. For example,
\[s^1_0=0, \quad s^1_1=1/4, \quad s^1_2=1/4, \quad s^1_3=1/8, \quad s^1_4=1/8, \ldots.\] 
We then decompose $J$ into a family of trapezoids $T_{a,b}$ indexed by pairs of integers  $(a,b)\in \bZ^2$ as in Figure \ref{F:Trapezoids} below. Concretely, for every $(a,b)\in \mathbb{N}^2$ the vertices of $T_{a,b}$ are given by: \medskip 

\begin{center}
$\begin{array}{cc}
   \displaystyle v_{a,b}^{0,1} = (\sum_{j=0}^{a} s^0_j, \sum_{j=0}^{b+1} s^a_j)  &  \displaystyle v_{a,b}^{1,1} = (\sum_{j=0}^{a+1} s^0_j, \sum_{j=0}^{b+1} s^{a+1}_j) \\
   \displaystyle v_{a,b}^{0,0} = (\sum_{j=0}^a s^0_j, \sum_{j=0}^{b} s^a_j) & \displaystyle v_{a,b}^{1,0} = (\sum_{j=0}^{a+1} s^0_j, \sum_{j=0}^{b} s^{a+1}_j)
\end{array}$
\end{center}
The remaining trapezoids  are defined analogously. For example,  $T_{1,2}$ has vertices:
\begin{align*}
     v_{1,2}^{0,1} &= (\frac{1}{2}, \, \frac{1}{4}+\frac{1}{4}+\frac{1}{8})   \quad   & v_{1,2}^{1,1} & = (\frac{1}{2}+\frac{1}{4}, \, \frac{1}{8}+\frac{1}{8}+\frac{1}{8})\\
     v_{1,2}^{0,0} &= (\frac{1}{2}, \, \frac{1}{4}+\frac{1}{4}) \quad  &v_{1,2}^{1,0}& = (\frac{1}{2}+\frac{1}{4},  \, \frac{1}{8}+\frac{1}{8})
\end{align*}

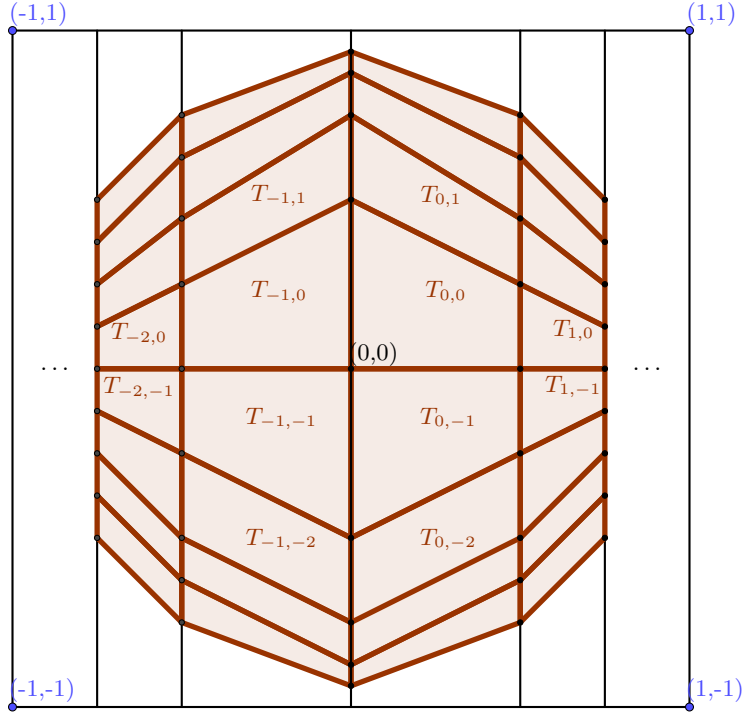
\begin{figure}[ht!]
\begin{tikzpicture}[line cap=round,line join=round,>=triangle 45,x=4.5cm,y=4.5cm]
\clip(-1.2,-1.2) rectangle (1.2,1.2);
\fill[line width=2.pt,color=zzttqq,fill=zzttqq,fill opacity=0.10000000149011612] (0.,0.) -- (0.5,0.) -- (0.5,0.25) -- (0.,0.5) -- cycle;
\fill[line width=2.pt,color=zzttqq,fill=zzttqq,fill opacity=0.10000000149011612] (0.,0.5) -- (0.5,0.25) -- (0.5,0.4444444444) -- (0.,0.75) -- cycle;
\fill[line width=2.pt,color=zzttqq,fill=zzttqq,fill opacity=0.10000000149011612] (0.,0.75) -- (0.5,0.4444444444) -- (0.5,0.625) -- (0.,0.875) -- cycle;
\fill[line width=2.pt,color=zzttqq,fill=zzttqq,fill opacity=0.10000000149011612] (0.,0.9375) -- (0.,0.875) -- (0.5,0.625) -- (0.5,0.75) -- cycle;
\fill[line width=2.pt,color=zzttqq,fill=zzttqq,fill opacity=0.10000000149011612] (0.,-0.5) -- (0.,-0.75) -- (0.5,-0.5) -- (0.5,-0.25) -- cycle;
\fill[line width=2.pt,color=zzttqq,fill=zzttqq,fill opacity=0.10000000149011612] (0.,-0.75) -- (0.,-0.875) -- (0.5,-0.625) -- (0.5,-0.5) -- cycle;
\fill[line width=2.pt,color=zzttqq,fill=zzttqq,fill opacity=0.10000000149011612] (0.,-0.875) -- (0.,-0.9375) -- (0.5,-0.75) -- (0.5,-0.625) -- cycle;
\fill[line width=2.pt,color=zzttqq,fill=zzttqq,fill opacity=0.10000000149011612] (0.,0.) -- (0.5,0.) -- (0.5,-0.25) -- (0.,-0.5) -- cycle;
\fill[line width=2.pt,color=zzttqq,fill=zzttqq,fill opacity=0.10000000149011612] (0.5,0.) -- (0.75,0.) -- (0.75,0.125) -- (0.5,0.25) -- cycle;
\fill[line width=2.pt,color=zzttqq,fill=zzttqq,fill opacity=0.10000000149011612] (0.5,0.25) -- (0.75,0.125) -- (0.75,0.25) -- (0.5,0.4444444444) -- cycle;
\fill[line width=2.pt,color=zzttqq,fill=zzttqq,fill opacity=0.10000000149011612] (0.75,0.25) -- (0.75,0.375) -- (0.5,0.625) -- (0.5,0.4444444444) -- cycle;
\fill[line width=2.pt,color=zzttqq,fill=zzttqq,fill opacity=0.10000000149011612] (0.75,0.375) -- (0.75,0.5) -- (0.5,0.75) -- (0.5,0.625) -- cycle;
\fill[line width=2.pt,color=zzttqq,fill=zzttqq,fill opacity=0.10000000149011612] (0.75,0.) -- (0.75,-0.125) -- (0.5,-0.25) -- (0.5,0.) -- cycle;
\fill[line width=2.pt,color=zzttqq,fill=zzttqq,fill opacity=0.10000000149011612] (0.75,-0.125) -- (0.75,-0.25) -- (0.5,-0.5) -- (0.5,-0.25) -- cycle;
\fill[line width=2.pt,color=zzttqq,fill=zzttqq,fill opacity=0.10000000149011612] (0.75,-0.25) -- (0.75,-0.375) -- (0.5,-0.625) -- (0.5,-0.5) -- cycle;
\fill[line width=2.pt,color=zzttqq,fill=zzttqq,fill opacity=0.10000000149011612] (0.75,-0.375) -- (0.75,-0.5) -- (0.5,-0.75) -- (0.5,-0.625) -- cycle;
\fill[line width=2.pt,color=zzttqq,fill=zzttqq,fill opacity=0.10000000149011612] (-0.5,0.) -- (0.,0.) -- (0.,0.5) -- (-0.5,0.25) -- cycle;
\fill[line width=2.pt,color=zzttqq,fill=zzttqq,fill opacity=0.10000000149011612] (-0.5,0.25) -- (-0.5,0.4444444444) -- (0.,0.75) -- (0.,0.5) -- cycle;
\fill[line width=2.pt,color=zzttqq,fill=zzttqq,fill opacity=0.10000000149011612] (-0.5,0.4444444444) -- (-0.5,0.625) -- (0.,0.875) -- (0.,0.75) -- cycle;
\fill[line width=2.pt,color=zzttqq,fill=zzttqq,fill opacity=0.10000000149011612] (-0.5,0.625) -- (-0.5,0.75) -- (0.,0.9375) -- (0.,0.875) -- cycle;
\fill[line width=2.pt,color=zzttqq,fill=zzttqq,fill opacity=0.10000000149011612] (-0.5,0.) -- (-0.5,-0.25) -- (0.,-0.5) -- (0.,0.) -- cycle;
\fill[line width=2.pt,color=zzttqq,fill=zzttqq,fill opacity=0.10000000149011612] (-0.5,-0.25) -- (-0.5,-0.5) -- (0.,-0.75) -- (0.,-0.5) -- cycle;
\fill[line width=2.pt,color=zzttqq,fill=zzttqq,fill opacity=0.10000000149011612] (-0.5,-0.5) -- (-0.5,-0.625) -- (0.,-0.875) -- (0.,-0.75) -- cycle;
\fill[line width=2.pt,color=zzttqq,fill=zzttqq,fill opacity=0.10000000149011612] (-0.5,-0.625) -- (-0.5,-0.75) -- (0.,-0.9375) -- (0.,-0.875) -- cycle;
\fill[line width=2.pt,color=zzttqq,fill=zzttqq,fill opacity=0.10000000149011612] (-0.7499799968702604,0.) -- (-0.5,0.) -- (-0.5,0.25) -- (-0.75,0.125) -- cycle;
\fill[line width=2.pt,color=zzttqq,fill=zzttqq,fill opacity=0.10000000149011612] (-0.75,0.125) -- (-0.75,0.25) -- (-0.5,0.4444444444) -- (-0.5,0.25) -- cycle;
\fill[line width=2.pt,color=zzttqq,fill=zzttqq,fill opacity=0.10000000149011612] (-0.75,0.25) -- (-0.75,0.375) -- (-0.5,0.625) -- (-0.5,0.4444444444) -- cycle;
\fill[line width=2.pt,color=zzttqq,fill=zzttqq,fill opacity=0.10000000149011612] (-0.75,0.375) -- (-0.75,0.5) -- (-0.5,0.75) -- (-0.5,0.625) -- cycle;
\fill[line width=2.pt,color=zzttqq,fill=zzttqq,fill opacity=0.10000000149011612] (-0.7499799968702604,0.) -- (-0.75,-0.125) -- (-0.5,-0.25) -- (-0.5,0.) -- cycle;
\fill[line width=2.pt,color=zzttqq,fill=zzttqq,fill opacity=0.10000000149011612] (-0.75,-0.125) -- (-0.75,-0.25) -- (-0.5,-0.5) -- (-0.5,-0.25) -- cycle;
\fill[line width=2.pt,color=zzttqq,fill=zzttqq,fill opacity=0.10000000149011612] (-0.75,-0.25) -- (-0.75,-0.375) -- (-0.5,-0.625) -- (-0.5,-0.5) -- cycle;
\fill[line width=2.pt,color=zzttqq,fill=zzttqq,fill opacity=0.10000000149011612] (-0.75,-0.375) -- (-0.75,-0.5) -- (-0.5,-0.75) -- (-0.5,-0.625) -- cycle;
\draw [line width=0.8pt] (0.5,1.)-- (0.5,-1.);
\draw [line width=0.8pt] (-1.,1.)-- (1.,1.);
\draw [line width=0.8pt] (1.,1.)-- (1.,-1.);
\draw [line width=0.8pt] (1.,-1.)-- (-1.,-1.);
\draw [line width=0.8pt] (-1.,-1.)-- (-1.,1.);
\draw [line width=0.8pt] (0.75,1.)-- (0.75,-1.);
\draw [line width=2.pt,color=zzttqq] (0.,0.)-- (0.5,0.);
\draw [line width=2.pt,color=zzttqq] (0.5,0.)-- (0.5,0.25);
\draw [line width=2.pt,color=zzttqq] (0.5,0.25)-- (0.,0.5);
\draw [line width=2.pt,color=zzttqq] (0.,0.5)-- (0.,0.);
\draw [line width=2.pt,color=zzttqq] (0.,0.5)-- (0.5,0.25);
\draw [line width=2.pt,color=zzttqq] (0.5,0.25)-- (0.5,0.4444444444);
\draw [line width=2.pt,color=zzttqq] (0.5,0.4444444444)-- (0.,0.75);
\draw [line width=2.pt,color=zzttqq] (0.,0.75)-- (0.,0.5);
\draw [line width=2.pt,color=zzttqq] (0.,0.75)-- (0.5,0.4444444444);
\draw [line width=2.pt,color=zzttqq] (0.5,0.4444444444)-- (0.5,0.625);
\draw [line width=2.pt,color=zzttqq] (0.5,0.625)-- (0.,0.875);
\draw [line width=2.pt,color=zzttqq] (0.,0.875)-- (0.,0.75);
\draw [line width=2.pt,color=zzttqq] (0.,0.9375)-- (0.,0.875);
\draw [line width=2.pt,color=zzttqq] (0.,0.875)-- (0.5,0.625);
\draw [line width=2.pt,color=zzttqq] (0.5,0.625)-- (0.5,0.75);
\draw [line width=2.pt,color=zzttqq] (0.5,0.75)-- (0.,0.9375);
\draw [line width=2.pt,color=zzttqq] (0.,-0.5)-- (0.,-0.75);
\draw [line width=2.pt,color=zzttqq] (0.,-0.75)-- (0.5,-0.5);
\draw [line width=2.pt,color=zzttqq] (0.5,-0.5)-- (0.5,-0.25);
\draw [line width=2.pt,color=zzttqq] (0.5,-0.25)-- (0.,-0.5);
\draw [line width=2.pt,color=zzttqq] (0.,-0.75)-- (0.,-0.875);
\draw [line width=2.pt,color=zzttqq] (0.,-0.875)-- (0.5,-0.625);
\draw [line width=2.pt,color=zzttqq] (0.5,-0.625)-- (0.5,-0.5);
\draw [line width=2.pt,color=zzttqq] (0.,-0.875)-- (0.,-0.9375);
\draw [line width=2.pt,color=zzttqq] (0.,-0.9375)-- (0.5,-0.75);
\draw [line width=2.pt,color=zzttqq] (0.5,-0.75)-- (0.5,-0.625);
\draw [line width=2.pt,color=zzttqq] (0.5,-0.625)-- (0.,-0.875);
\draw [line width=2.pt,color=zzttqq] (0.,0.)-- (0.5,0.);
\draw [line width=2.pt,color=zzttqq] (0.5,0.)-- (0.5,-0.25);
\draw [line width=2.pt,color=zzttqq] (0.5,-0.25)-- (0.,-0.5);
\draw [line width=2.pt,color=zzttqq] (0.,-0.5)-- (0.,0.);
\draw [line width=2.pt,color=zzttqq] (0.5,0.)-- (0.75,0.);
\draw [line width=2.pt,color=zzttqq] (0.75,0.)-- (0.75,0.125);
\draw [line width=2.pt,color=zzttqq] (0.75,0.125)-- (0.5,0.25);
\draw [line width=2.pt,color=zzttqq] (0.5,0.25)-- (0.5,0.);
\draw [line width=2.pt,color=zzttqq] (0.5,0.25)-- (0.75,0.125);
\draw [line width=2.pt,color=zzttqq] (0.75,0.125)-- (0.75,0.25);
\draw [line width=2.pt,color=zzttqq] (0.75,0.25)-- (0.5,0.4444444444);
\draw [line width=2.pt,color=zzttqq] (0.5,0.4444444444)-- (0.5,0.25);
\draw [line width=2.pt,color=zzttqq] (0.75,0.25)-- (0.75,0.375);
\draw [line width=2.pt,color=zzttqq] (0.75,0.375)-- (0.5,0.625);
\draw [line width=2.pt,color=zzttqq] (0.5,0.625)-- (0.5,0.4444444444);
\draw [line width=2.pt,color=zzttqq] (0.5,0.4444444444)-- (0.75,0.25);
\draw [line width=2.pt,color=zzttqq] (0.75,0.375)-- (0.75,0.5);
\draw [line width=2.pt,color=zzttqq] (0.75,0.5)-- (0.5,0.75);
\draw [line width=2.pt,color=zzttqq] (0.5,0.75)-- (0.5,0.625);
\draw [line width=2.pt,color=zzttqq] (0.5,0.625)-- (0.75,0.375);
\draw [line width=2.pt,color=zzttqq] (0.75,0.)-- (0.75,-0.125);
\draw [line width=2.pt,color=zzttqq] (0.75,-0.125)-- (0.5,-0.25);
\draw [line width=2.pt,color=zzttqq] (0.5,-0.25)-- (0.5,0.);
\draw [line width=2.pt,color=zzttqq] (0.5,0.)-- (0.75,0.);
\draw [line width=2.pt,color=zzttqq] (0.75,-0.125)-- (0.75,-0.25);
\draw [line width=2.pt,color=zzttqq] (0.75,-0.25)-- (0.5,-0.5);
\draw [line width=2.pt,color=zzttqq] (0.5,-0.5)-- (0.5,-0.25);
\draw [line width=2.pt,color=zzttqq] (0.5,-0.25)-- (0.75,-0.125);
\draw [line width=2.pt,color=zzttqq] (0.75,-0.25)-- (0.75,-0.375);
\draw [line width=2.pt,color=zzttqq] (0.75,-0.375)-- (0.5,-0.625);
\draw [line width=2.pt,color=zzttqq] (0.5,-0.625)-- (0.5,-0.5);
\draw [line width=2.pt,color=zzttqq] (0.5,-0.5)-- (0.75,-0.25);
\draw [line width=2.pt,color=zzttqq] (0.75,-0.375)-- (0.75,-0.5);
\draw [line width=2.pt,color=zzttqq] (0.75,-0.5)-- (0.5,-0.75);
\draw [line width=2.pt,color=zzttqq] (0.5,-0.75)-- (0.5,-0.625);
\draw [line width=2.pt,color=zzttqq] (0.5,-0.625)-- (0.75,-0.375);
\draw [line width=0.8pt] (-0.5,1.)-- (-0.5,-1.);
\draw [line width=2.pt,color=zzttqq] (-0.5,0.)-- (0.,0.);
\draw [line width=2.pt,color=zzttqq] (0.,0.)-- (0.,0.5);
\draw [line width=2.pt,color=zzttqq] (0.,0.5)-- (-0.5,0.25);
\draw [line width=2.pt,color=zzttqq] (-0.5,0.25)-- (-0.5,0.);
\draw [line width=2.pt,color=zzttqq] (-0.5,0.25)-- (-0.5,0.4444444444);
\draw [line width=2.pt,color=zzttqq] (-0.5,0.4444444444)-- (0.,0.75);
\draw [line width=2.pt,color=zzttqq] (0.,0.75)-- (0.,0.5);
\draw [line width=2.pt,color=zzttqq] (-0.5,0.4444444444)-- (-0.5,0.625);
\draw [line width=2.pt,color=zzttqq] (-0.5,0.625)-- (0.,0.875);
\draw [line width=2.pt,color=zzttqq] (0.,0.875)-- (0.,0.75);
\draw [line width=2.pt,color=zzttqq] (0.,0.75)-- (-0.5,0.4444444444);
\draw [line width=2.pt,color=zzttqq] (-0.5,0.625)-- (-0.5,0.75);
\draw [line width=2.pt,color=zzttqq] (-0.5,0.75)-- (0.,0.9375);
\draw [line width=2.pt,color=zzttqq] (0.,0.9375)-- (0.,0.875);
\draw [line width=2.pt,color=zzttqq] (0.,0.875)-- (-0.5,0.625);
\draw [line width=2.pt,color=zzttqq] (-0.5,0.)-- (-0.5,-0.25);
\draw [line width=2.pt,color=zzttqq] (-0.5,-0.25)-- (0.,-0.5);
\draw [line width=2.pt,color=zzttqq] (0.,-0.5)-- (0.,0.);
\draw [line width=2.pt,color=zzttqq] (0.,0.)-- (-0.5,0.);
\draw [line width=2.pt,color=zzttqq] (-0.5,-0.25)-- (-0.5,-0.5);
\draw [line width=2.pt,color=zzttqq] (-0.5,-0.5)-- (0.,-0.75);
\draw [line width=2.pt,color=zzttqq] (0.,-0.75)-- (0.,-0.5);
\draw [line width=2.pt,color=zzttqq] (0.,-0.5)-- (-0.5,-0.25);
\draw [line width=2.pt,color=zzttqq] (-0.5,-0.5)-- (-0.5,-0.625);
\draw [line width=2.pt,color=zzttqq] (-0.5,-0.625)-- (0.,-0.875);
\draw [line width=2.pt,color=zzttqq] (0.,-0.875)-- (0.,-0.75);
\draw [line width=2.pt,color=zzttqq] (0.,-0.75)-- (-0.5,-0.5);
\draw [line width=2.pt,color=zzttqq] (-0.5,-0.625)-- (-0.5,-0.75);
\draw [line width=2.pt,color=zzttqq] (-0.5,-0.75)-- (0.,-0.9375);
\draw [line width=2.pt,color=zzttqq] (0.,-0.9375)-- (0.,-0.875);
\draw [line width=2.pt,color=zzttqq] (0.,-0.875)-- (-0.5,-0.625);
\draw [line width=0.8pt] (-0.75,1.)-- (-0.7499599937405208,-1.);
\draw [line width=2.pt,color=zzttqq] (-0.7499799968702604,0.)-- (-0.5,0.);
\draw [line width=2.pt,color=zzttqq] (-0.5,0.)-- (-0.5,0.25);
\draw [line width=2.pt,color=zzttqq] (-0.5,0.25)-- (-0.75,0.125);
\draw [line width=2.pt,color=zzttqq] (-0.75,0.125)-- (-0.7499799968702604,0.);
\draw [line width=2.pt,color=zzttqq] (-0.75,0.125)-- (-0.75,0.25);
\draw [line width=2.pt,color=zzttqq] (-0.75,0.25)-- (-0.5,0.4444444444);
\draw [line width=2.pt,color=zzttqq] (-0.5,0.4444444444)-- (-0.5,0.25);
\draw [line width=2.pt,color=zzttqq] (-0.5,0.25)-- (-0.75,0.125);
\draw [line width=2.pt,color=zzttqq] (-0.75,0.25)-- (-0.75,0.375);
\draw [line width=2.pt,color=zzttqq] (-0.75,0.375)-- (-0.5,0.625);
\draw [line width=2.pt,color=zzttqq] (-0.5,0.625)-- (-0.5,0.4444444444);
\draw [line width=2.pt,color=zzttqq] (-0.5,0.4444444444)-- (-0.75,0.25);
\draw [line width=2.pt,color=zzttqq] (-0.75,0.375)-- (-0.75,0.5);
\draw [line width=2.pt,color=zzttqq] (-0.75,0.5)-- (-0.5,0.75);
\draw [line width=2.pt,color=zzttqq] (-0.5,0.75)-- (-0.5,0.625);
\draw [line width=2.pt,color=zzttqq] (-0.5,0.625)-- (-0.75,0.375);
\draw [line width=2.pt,color=zzttqq] (-0.7499799968702604,0.)-- (-0.75,-0.125);
\draw [line width=2.pt,color=zzttqq] (-0.75,-0.125)-- (-0.5,-0.25);
\draw [line width=2.pt,color=zzttqq] (-0.5,-0.25)-- (-0.5,0.);
\draw [line width=2.pt,color=zzttqq] (-0.5,0.)-- (-0.7499799968702604,0.);
\draw [line width=2.pt,color=zzttqq] (-0.75,-0.125)-- (-0.75,-0.25);
\draw [line width=2.pt,color=zzttqq] (-0.75,-0.25)-- (-0.5,-0.5);
\draw [line width=2.pt,color=zzttqq] (-0.5,-0.5)-- (-0.5,-0.25);
\draw [line width=2.pt,color=zzttqq] (-0.5,-0.25)-- (-0.75,-0.125);
\draw [line width=2.pt,color=zzttqq] (-0.75,-0.25)-- (-0.75,-0.375);
\draw [line width=2.pt,color=zzttqq] (-0.75,-0.375)-- (-0.5,-0.625);
\draw [line width=2.pt,color=zzttqq] (-0.5,-0.625)-- (-0.5,-0.5);
\draw [line width=2.pt,color=zzttqq] (-0.5,-0.5)-- (-0.75,-0.25);
\draw [line width=2.pt,color=zzttqq] (-0.75,-0.375)-- (-0.75,-0.5);
\draw [line width=2.pt,color=zzttqq] (-0.75,-0.5)-- (-0.5,-0.75);
\draw [line width=2.pt,color=zzttqq] (-0.5,-0.75)-- (-0.5,-0.625);
\draw [line width=2.pt,color=zzttqq] (-0.5,-0.625)-- (-0.75,-0.375);
\draw [line width=0.8pt] (0.,1.)-- (0.,-1.);
\begin{scriptsize}
\draw [fill=ududff] (-1.,1.) circle (1.5pt);
\draw[color=ududff] (-0.9231368192219812,1.0498679065974856) node {(-1,1)};
\draw [fill=ududff] (1.,1.) circle (1.5pt);
\draw[color=ududff] (1.0649813575134874,1.0498679065974856) node {(1,1)};
\draw [fill=ududff] (1.,-1.) circle (1.5pt);
\draw[color=ududff] (1.0754635271623563,-0.9487324397867021) node {(1,-1)};
\draw [fill=ududff] (-1.,-1.) circle (1.5pt);
\draw[color=ududff] (-0.9126546495731124,-0.9487324397867021) node {(-1,-1)};
\draw [fill=black] (0.,0.) circle (1.0pt);
\draw[color=black] (0.06568118432131853,0.04357962030614645) node {(0,0)};
\draw [fill=black] (0.,0.5) circle (1.0pt);
\draw [fill=black] (0.5,0.) circle (1.0pt);
\draw [fill=black] (0.,0.75) circle (1.0pt);
\draw [fill=black] (0.,0.875) circle (1.0pt);
\draw [fill=black] (0.,0.9375) circle (1.0pt);
\draw [fill=black] (0.,-0.75) circle (1.0pt);
\draw [fill=black] (0.,-0.875) circle (1.0pt);
\draw [fill=black] (0.,-0.9375) circle (1.0pt);
\draw [fill=black] (0.5,-0.25) circle (1.0pt);
\draw [fill=black] (0.75,0.) circle (1.0pt);
\draw [fill=black] (0.75,0.25) circle (1.0pt);
\draw [fill=black] (0.75,0.5) circle (1.0pt);
\draw [fill=black] (0.75,-0.25) circle (1.0pt);
\draw [fill=black] (0.75,-0.5) circle (1.0pt);
\draw [fill=black] (0.5,0.25) circle (1.0pt);
\draw[color=zzttqq] (0.28056566212313105,0.22701758916133852) node {$T_{0,0}$};
\draw [fill=black] (0.5,0.4444444444) circle (1.0pt);
\draw[color=zzttqq] (0.2665894359246392,0.5100361696807777) node {$T_{0,1}$};
\draw [fill=black] (0.5,0.625) circle (1.0pt);
\draw [fill=black] (0.5,0.75) circle (1.0pt);
\draw [fill=black] (0.5,-0.5) circle (1.0pt);
\draw [fill=black] (0.,-0.5) circle (1.0pt);
\draw[color=zzttqq] (0.28755377522237696,-0.504613115511143346) node {$T_{0,-2}$};
\draw [fill=black] (0.5,-0.625) circle (1.0pt);
\draw [fill=black] (0.5,-0.75) circle (1.0pt);
\draw[color=zzttqq] (0.28755377522237696,-0.14684646164829102) node {$T_{0,-1}$};
\draw [fill=black] (0.75,0.125) circle (1.0pt);
\draw[color=zzttqq] (0.6579237694824115,0.113267806232152546) node {$T_{1,0}$};
\draw [fill=black] (0.75,0.375) circle (1.0pt);
\draw [fill=black] (0.75,-0.125) circle (1.0pt);
\draw[color=zzttqq] (0.65649118825816575,-0.05250693480847798) node {$T_{1,-1}$};
\draw [fill=black] (0.75,-0.375) circle (1.0pt);
\draw [fill=uuuuuu] (-0.5,0.) circle (1.0pt);
\draw [fill=uuuuuu] (-0.5,0.4444444444) circle (1.0pt);
\draw [fill=uuuuuu] (-0.5,0.75) circle (1.0pt);
\draw [fill=uuuuuu] (-0.5,0.25) circle (1.0pt);
\draw[color=zzttqq] (-0.21209631137370738,0.22701758916133852) node {$T_{-1,0}$};
\draw[color=zzttqq] (-0.21209631137370738,0.51275064524288913) node {$T_{-1,1}$};
\draw [fill=uuuuuu] (-0.5,0.625) circle (1.0pt);
\draw [fill=uuuuuu] (-0.5,-0.5) circle (1.0pt);
\draw [fill=uuuuuu] (-0.5,-0.75) circle (1.0pt);
\draw [fill=uuuuuu] (-0.5,-0.25) circle (1.0pt);
\draw[color=zzttqq] (-0.20510819827446145,-0.14684646164829102) node {$T_{-1,-1}$};
\draw[color=zzttqq] (-0.20510819827446145,-0.504613115511143346) node {$T_{-1,-2}$};
\draw [fill=uuuuuu] (-0.5,-0.625) circle (1.0pt);
\draw [fill=uuuuuu] (-0.7499799968702604,0.) circle (1.0pt);
\draw [fill=uuuuuu] (-0.75,0.25) circle (1.0pt);
\draw [fill=uuuuuu] (-0.75,0.5) circle (1.0pt);
\draw [fill=uuuuuu] (-0.75,0.125) circle (1.0pt);
\draw[color=zzttqq] (-0.625859603621833649,0.103267806232152546) node {$T_{-2,0}$};
\draw [fill=uuuuuu] (-0.75,0.375) circle (1.0pt);
\draw [fill=uuuuuu] (-0.75,-0.25) circle (1.0pt);
\draw [fill=uuuuuu] (-0.75,-0.5) circle (1.0pt);
\draw [fill=uuuuuu] (-0.75,-0.125) circle (1.0pt);
\draw[color=zzttqq] (-0.625789722490841189,-0.056250693480847798) node {$T_{-2,-1}$};
\draw [fill=uuuuuu] (-0.75,-0.375) circle (1.0pt);
\draw[color=black] (0.875,0) node {$\ldots$};
\draw[color=black] (-0.875,0) node {$\ldots$};
\end{scriptsize}
\end{tikzpicture}
\caption{Decomposing $J$ into the trapezoids $T_{a,b}$}\label{F:Trapezoids}
\end{figure}

Consider the map $p \colon J\to J $ which fixes the boundary $\partial J$ pointwise and which maps, for every $a,b\in\mathbb{Z}^2$, the trapezoid  $T_{a,b}$ to $T_{a,b+1}$ via the   bilinear interpolation  which sends the vertices 
 $v_{a,b}^{0,0},  v_{a,b}^{0,1},  v_{a,b}^{1,0},  v_{a,b}^{1,1}$ of $T_{a,b}$ to the corresponding   $v_{a,b+1}^{0,0},  v_{a,b+1}^{0,1},  v_{a,b+1}^{1,0},  v_{a,b+1}^{1,1}$ of $T_{a,b+1}$. That is,  $p\upharpoonright T_{a,b}=\chi_{a,b+1}\circ  \chi^{-1}_{a,b}$, where for all $a,b\in\mathbb{Z}$ the map $ \chi_{a,b}\colon [0,1]^2 \to T_{a,b}$ is given by
 \[\chi_{a,b}(s,t):=(1-s)(1-t)v^{0,0}_{a,b}  +(1-s)t v^{0,1}_{a,b}  +s(1-t) v^{1,0}_{a,b} +s t v^{1,1}_{a,b}. \]
 It is clear that $p$ is a self-homeomorphism of the interior $J\setminus \partial J$ of $J$. The fact that $p$ is continuous on $\partial J$, and hence a self-homeomorphism of $J$, follows since $\lim_{k\to \infty} s^k_n= 0$ holds for all $n\in\mathbb{N}$. Let now $\mathcal{B}_0$ be any deballing recipe of $\bigcup_{a\in\mathbb{Z}}T_{a,b}$. The desired deballing recipe  $\mathcal{B}$ of $J$ and the homeomorphism $\pi\colon \mathcal{B}(J)\to \mathcal{B}(J)$ are now simply given by:
\[\mathcal{B}:=\bigcup_{b\in\mathbb{Z}}(p^b)^{*}\mathcal{B}_0 \quad \text{ and } \quad \pi:= p\res  \mathcal{B}(J),\]
where $(p^b)^{*}\mathcal{B}_0$ is the pull back of $\mathcal{B}_0$ under the $b$-iterate $p^b$ of $p$; see Section \ref{SS:Preliminaries}.
\end{proof}

\section{Proof of Lemma \ref{L:main2}}\label{SL:main2}
Recall that $L:=[0,1]\times\mathbb{R}$, $\partial L=\{0,1\} \times \mathbb{R}$, and for every   $k,\ell\in \mathbb{Z}$, we have
\[
h_{k,\ell}^{\partial}\colon \partial L\to \partial L \quad \text{ given by } \quad h^{\partial}_{k,\ell}(0,y)=(0,y+k) \text{ and } h^{\partial}_{k,\ell}(1,y)=(1,y+\ell).
\]
For every $y\in\mathbb{R}$, let  $\lfloor y \rfloor$  be the unique integer with  $0\leq y-\lfloor y \rfloor <1$. We will define 
deballing recipe $\mathcal{B}$ for $L$ so that,  for every $k,\ell\in\mathbb{Z}$, there exists a homeomorphism $h^{\mathcal{B}}_{k,\ell}\colon \mathcal{B}(L) \to  \mathcal{B}(L)$ which  extends $h^{\partial}_{k,\ell}$,
so that for all $(x,y)\in L$, setting $(\tilde{x},\tilde{y}):=h^{\mathcal{B}}_{k,\ell}(x,y)$, we have that
\begin{equation}
 \lfloor y \rfloor +\min\{k,\ell\}-1 \;  \leq \;  \tilde{y} \;  \leq  \; (\lfloor y \rfloor+1) +\max\{k,\ell\}+1.  
\end{equation}

\begin{proof}[Proof of Lemma \ref{L:main2}]
Let $\{(a_n,b_n)\in\mathbb{N}\}$ be a enumeration of all pairs of integers  $a,b\in \mathbb{Z}$. For each $n\in\mathbb{N}$ let $p_n\colon [0,1]\to \mathbb{R}$ be the function whose graph $\{(t,p_n(t))\colon t\in [0,1]\}$ is the straight path from $(0,a_n)$ to $(1,b_n)$.  Namely, $p_{n}(t)=(1-t)a_n+tb_n$. 
Notice that the union of the graphs of the maps $p_n$ is dense in $L$. Hence, there is no deballing recipe for $L$ whose elements avoid this union. Our first goal is to define,  small  perturbations $\pi_n$ of $p_n$, together with
the  deballing recipe $\mathcal{B}$ for $L$ so that each $B\in\mathcal{B}$ avoids the graphs of the pertubations.

Let $Q=\{(x_n,y_n)\in\mathbb{N}\}$ be a countable dense subset of $L\setminus \partial L$ so that $Q$  does not intersect the graph of any function $p_n$, that is,    $p_n(x_m)\neq y_m$ for all $n,m\in\mathbb{N}$. We can always find such $Q$, since the graph of each $p_n$ is nowhere dense in $L$.

\begin{claim}\label{C:1} For every $n\in \mathbb{N}$ one may define:
\begin{enumerate}
\item a continuous function $\pi_n\colon [0,1] \to \mathbb{R}$ with  $\pi_n(0)=a_n$ and $\pi_n(1)=b_n$ so that:
\begin{enumerate}
\item for all $t\in [0,1]$ we have $\min\{a_n,b_n\}-1\leq  \pi_n(t) \leq \max\{a_n,b_n\}+1$; 
\item  for all $t\in [0,1]$ and all $m,n$ with $b_n-a_n=b_m-a_m$ we have $\pi_n(t)\neq \pi_m(t)$;
\item  for all $t\in [0,1]$ we have $(t,\pi_n(t))\not\in Q$.
\end{enumerate}
\item a closed ball $B_n=\{(x,y)\in L \colon |x-x_n|,|y-y_n|\leq r_n\}$, in the $\ell_\infty$--norm,
with center $(x_n,y_n)$ and  some radius $r_n<1/2$ which will be specified later, so that:
\begin{enumerate}
\item  $B_n\cap \partial L=\emptyset$ and $B_n\cap B_m=\emptyset$ for all $n,m\in \mathbb{N}$ with $n\neq m$;
\item $B_n \cap \{\big(t,\pi_m(t)\big)\colon t\in [0,1]\}=\emptyset$  for all  $n,m\in \mathbb{N}$.
\end{enumerate}
\end{enumerate}
\end{claim}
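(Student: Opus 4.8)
The plan is to construct $\pi_0,B_0,\pi_1,B_1,\dots$ by a single recursion alternating between curves and balls, imposing at the outset the extra constraint $r_n<2^{-n-3}$ (in addition to $r_n<1/2$). The structural point is that each new curve $\pi_n$ must be steered around the finitely many balls already placed, whereas each new ball $B_n$ need only avoid finitely many closed curves together with $\partial L$ and the earlier balls — so at every stage only finitely much data is relevant and both ``directions'' of interference are handled greedily. Suppose then that $\pi_0,\dots,\pi_{n-1}$ and $B_0,\dots,B_{n-1}$ have been built, with (1)(a)--(c) holding among $\pi_0,\dots,\pi_{n-1}$, with each $\pi_m$ lying within distance $1/3$ of the segment $p_m$, and with $B_0,\dots,B_{n-1}$ pairwise disjoint, disjoint from $\partial L$, and disjoint from $\operatorname{graph}(\pi_m)$ for all $m<n$.

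\emph{The curve $\pi_n$.} I will build a continuous $\pi_n\colon[0,1]\to\mathbb{R}$ with $\pi_n(0)=a_n$, $\pi_n(1)=b_n$, lying within $1/3$ of $p_n$, whose graph misses $B_0,\dots,B_{n-1}$ and misses $Q$. Condition (1)(a) then follows from $p_n(t)\in[\min\{a_n,b_n\},\max\{a_n,b_n\}]$; condition (1)(b) follows automatically, since for distinct pairs with $b_n-a_n=b_m-a_m$ one has $a_n\ne a_m$, so $p_n,p_m$ are parallel at (integer) distance $\ge 1$ and hence $|\pi_n(t)-\pi_m(t)|\ge 1-\tfrac13-\tfrac13>0$ for all $t$; condition (1)(c) is the last thing to be arranged. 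To get the graph: work inside the closed $\tfrac13$-tube $\{(t,y):|y-p_n(t)|\le\tfrac13\}$, which contains the endpoints $(0,a_n),(1,b_n)$ and has a vertical slice of width $\tfrac23$ over each $t\in[0,1]$. The balls $B_0,\dots,B_{n-1}$ are pairwise-disjoint $\ell_\infty$-balls with $\sum_{m<n}2r_m<\tfrac12$, so over each $t$ they cover a subset of that slice of width less than $\tfrac12<\tfrac23$; an elementary planar argument (genuinely elementary precisely because the balls are this thin) now produces an $x$-monotone arc — that is, the graph of a continuous function — joining $(0,a_n)$ to $(1,b_n)$ inside the tube and avoiding the balls. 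Finally, since ``avoiding $B_0,\dots,B_{n-1}$'' and ``staying within $\tfrac13$ of $p_n$'' are open conditions and $Q$ is countable, a sufficiently small further perturbation of $\pi_n$ also achieves $(t,\pi_n(t))\notin Q$ for all $t$. I expect this routing step — simultaneously keeping $\pi_n$ the graph of a function, confined to its tube, and clear of the earlier balls — to be the part of the argument requiring the most care.

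\emph{The ball $B_n$.} Its center is the $n$-th point $(x_n,y_n)$ of $Q$. Since $(x_n,y_n)\in L\setminus\partial L$, it has positive distance from $\partial L$; since (1)(c) now holds for $\pi_0,\dots,\pi_n$ we have $\pi_m(x_n)\ne y_n$ for each $m\le n$, so $(x_n,y_n)$ has positive distance from each of the finitely many closed arcs $\operatorname{graph}(\pi_0),\dots,\operatorname{graph}(\pi_n)$; and if $(x_n,y_n)$ lies in none of $B_0,\dots,B_{n-1}$ it has positive distance from those as well. (Should $(x_n,y_n)$ happen to lie in some earlier ball — which cannot be excluded once $Q$ is fixed in advance — one simply omits this index; the point is then already in $\bigcup_m B_m$, so the family of balls produced still contains the dense set $Q$, hence remains dense in $\operatorname{int}(L)$ and is a legitimate deballing recipe. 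This is the only caveat needed on the literal ``for every $n$'' in clause (2).) Choosing $r_n>0$ below $2^{-n-3}$ and below all of these finitely many positive distances yields $B_n$ with $B_n\cap\partial L=\emptyset$, $B_n\cap B_m=\emptyset$ for $m<n$, and $B_n\cap\operatorname{graph}(\pi_m)=\emptyset$ for all $m\le n$, with plenty of room to shrink $r_n$ further later.

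\emph{Conclusion.} Clauses (1)(a)--(c) hold by the construction of the $\pi_n$. For (2): $B_n\cap\partial L=\emptyset$ and $B_n\cap B_m=\emptyset$ for $m<n$ hold by the choice of $r_n$, while $B_n\cap B_m=\emptyset$ for $m>n$ holds because $B_m$ was chosen disjoint from $B_n$ at stage $m$; similarly $B_n\cap\operatorname{graph}(\pi_m)=\emptyset$ for $m\le n$ by the choice of $r_n$, and for $m>n$ because $\pi_m$ was routed to miss $B_n$ at stage $m$. Thus the sequence $(\pi_n)_n$ and the family $\mathcal{B}=\{B_n\}_n$ — the deballing recipe for $L$ promised by the claim — meet all the stated requirements.
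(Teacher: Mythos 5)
Your treatment of the balls is essentially sound, and your caveat that the center $(x_n,y_n)$ may already lie inside an earlier ball is a legitimate point (the paper's own assertion that the compact set $K$, which contains the earlier balls, ``does not intersect $Q$'' is false as written; omitting such indices, or noting that $Q\subseteq\bigcup_m B_m$ still holds, repairs it). The gap is in the construction of $\pi_n$: the ``elementary planar argument'' threading an $x$-monotone arc through the $\tfrac13$-tube is not true, and the culprit is the slope of $p_n$. From ``over each $t$ the balls cover less than $\tfrac12<\tfrac23$ of the vertical slice'' you infer a continuous selection, but nonemptiness of every fibre of the free region does not make that region connected. Concretely, suppose $b_n-a_n=10$ and some earlier ball $B_m$ is a square of side $2r_m=0.2$ centered (up to an irrelevant perturbation, since $Q$ is dense) at a point of the line $p_n$; nothing in your recursion forbids this, because $B_m$ is only required to avoid the curves $\pi_i$ with $i\le m$ and the earlier balls, not the future lines $p_n$. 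The intersection of the tube $\{|y-p_n(t)|\le\tfrac13\}$ with the horizontal line through the center of $B_m$ is a segment of length $\tfrac{2/3}{10}=\tfrac1{15}<0.2$ contained in $B_m$; and if $(x_m,y_m)$ is the center, then $\pi_n(x_m-\tfrac1{30})\le y_m$ and $\pi_n(x_m+\tfrac1{30})\ge y_m$ for any function whose graph stays in the tube, so by the intermediate value theorem the graph meets that segment, hence meets $B_m$. Thus a single small square severs the steep thin tube outright --- no arc, monotone or otherwise, gets past it --- even though the free vertical space in each slice is nonempty (it merely migrates from the bottom of the slice to the top as $t$ crosses $B_m$). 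Your bound $r_n<2^{-n-3}$ controls total size but not this phenomenon.

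This is exactly why the paper does not confine $\pi_n$ to a tube around $p_n$. There, $\pi_n$ is allowed to roam over the full band $[\min\{a_n,b_n\}-1,\max\{a_n,b_n\}+1]$, whose vertical cross-section has length at least $2$ and hence cannot be severed by a ball of diameter less than $1$; each offending ball $B_m$ is circumvented by a local detour confined to a small $\varepsilon$-neighborhood of $B_m$, and condition (1)(b) is secured not by tube-separation but directly, by shrinking $\varepsilon$ (via compactness) so that this neighborhood avoids the graphs of all earlier $\pi_i$ with $b_i-a_i=b_n-a_n$. In your argument, (1)(b) is the only thing the $\tfrac13$-tube buys you; once the tube is abandoned --- as it must be --- you need a separate mechanism for (1)(b), and that is the content you are currently missing.
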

\begin{proof}[Proof of Claim]
 Assume that $(\pi_m\colon m<n)$ and $(B_m\colon m<n)$ have been defined satisfying the above. We  first define $\pi_n$ and then $B_n$. Let  $d_{\infty}$ be the metric coming form the  $\ell_{\infty}$--norm.

 Let  $q\colon [0,1]\to [\min\{a_n,b_n\}-1,\max\{a_n,b_n\}+1]$ be any continuous function with $q(0)=a_n$ and $q(1)=b_n$ so that    for all $t\in [0,1]$ and all $m<n$ with $b_n-a_n=b_m-a_m$ we have  that $q(t)\neq \pi_m(t)$. Such function exists since  1(a) and 1(b) hold for  $(\pi_m\colon m<n)$. Moreover, we can take $q$ to be piece-wise linear. Finally, since $Q$ is countable, we can always adjust the slopes of each linear piece of $q$ to make sure that we additionally have $(t,q(t))\not\in Q$ for all $t\in[0,1]$.
  The plan is to define $\pi_n$ so that it coincides with $q$ until its graph reaches close enough to some $B_m$ with $m<n$ which intersects the graph of $q$. We will then momentarily divert $\pi_n$ from $q$ in order to circumvent  $B_m$, but without violating 1(a),(b),(c) and 2(b) above. We then continue by letting $\pi_n$ coincide  with $q$  again until the next ball from  $(B_m\colon m<n)$ gets in the way, and has to be  circumvented in a similar fashion.

To circumvent $B_m$ as  in the previous paragraph we work as follows.  First  fix  some $\varepsilon>0$  so that  for all $i,j<n$ we have that   $\varepsilon <  d_{\infty}(B_i,B_{j})/2$. By shrinking $\varepsilon$ even further we may assume that for all $(x,y)$ with $|x-x_m|,|y-y_m|<r_m+\varepsilon$ and every   $i<n$ with $b_n-a_n=b_i-a_i$ we have that $\pi_m(x)\neq y$. The fact that we can find such $\varepsilon$ is a consequence of 2(b) and 1(b) respectively, since  the balls in
 in $(B_m\colon m<n)$ and the graphs of $(\pi_m \colon m<n)$ are compact.  Finally   since $r_n<1/2$ and the graph of $q\colon [0,1]\to [\min\{a_n,b_n\}-1,\max\{a_n,b_n\}+1]$  meets $B_m$, we can shrink $\varepsilon$ even further if necessary, so that  either the interval $(y_m+r_m,y_m+r_m+\varepsilon)$ or the interval $(y_m-r_m-\varepsilon,y_m-r_m)$  is entirely contained in $[\min\{a_n,b_n\}-1,\max\{a_n,b_n\}+1]$.  Without loss of generality we may assume that the former is the case:
\[ (y_m+r_m,y_m+r_m+\varepsilon)\subseteq [\min\{a_n,b_n\}-1,\max\{a_n,b_n\}+1]\]
 If the latter is the case, the rest of the argument is similar.

 Assume now that we have defined $\pi_n$ so far on the domain $[0,x_m-r_m-\varepsilon/2]$ and that $\pi_n(x_m-r_m-\varepsilon/2)=q(x_m-r_m-\varepsilon/2)$. We will extend the definition of  $\pi_n$  on the  domain   $[x_m-r_m-\varepsilon/2,x_m+r_m+\varepsilon/2]$ as follows. Since $Q$ is countable we may find some $y_0,y_1\in (y_m+r_m,y_m+r_m+\varepsilon)$ so that  the graph of the function
\[ r(t)=  \begin{cases} \frac{y_0-q(x_m-r_m-\varepsilon/2) }{\varepsilon/2}t+q(x_m-r_m-\varepsilon/2) & \text{ if }  t\in [x_m-r_m-\varepsilon/2,x_m-r_m]\\
\frac{y_1-y_0}{2r_m}t+y_0  & \text{ if }  t\in [x_m-r_m,x_m+r_m]\\
\frac{q(x_m+r_m+\varepsilon/2)-y_1 }{\varepsilon/2}t+q(x_m+r_m+\varepsilon/2) & \text{ if }  t\in [x_m+r_m,x_m+r_m+\varepsilon/2]\\
  \end{cases}\] 
  does not intersect $Q$.  We may now extend the definition of $\pi_n$ to agree with $r$ on the interval  $[x_m-r_m-\varepsilon/2,x_m+r_m+\varepsilon/2]$ and then again with $q$ until the  next ball from  $(B_m\colon m<n)$ needs to be circumvented or  $(1,b_n)$ has been reached. This concludes the definition of $\pi_n$.

\begin{figure}[ht!]
\begin{center}
\begin{tikzpicture}
[line cap=round,line join=round,>=triangle 45,x=1cm,y=1cm,scale=0.8]
\fill[line width=2pt,color=zzttqq,fill=zzttqq,fill opacity=0.46] (-2,-2) -- (2,-2) -- (2,2) -- (-2,2) -- cycle;
\fill[line width=0.4pt,color=zzttqq,fill=zzttqq,fill opacity=0.10000000149011612] (-3,-3) -- (3,-3) -- (3,3) -- (-3,3) -- cycle;
\draw [line width=1pt,color=zzttqq] (-2,-2)-- (2,-2);
\draw [line width=1pt,color=zzttqq] (2,-2)-- (2,2);
\draw [line width=1pt,color=zzttqq] (2,2)-- (-2,2);
\draw [line width=1pt,color=zzttqq] (-2,2)-- (-2,-2);
\draw [line width=0.2pt,color=zzttqq] (-3,-3)-- (3,-3);
\draw [line width=0.2pt,color=zzttqq] (3,-3)-- (3,3);
\draw [line width=0.2pt,color=zzttqq] (3,3)-- (-3,3);
\draw [line width=0.2pt,color=zzttqq] (-3,3)-- (-3,-3);
\draw [line width=1.4pt] (-2,-1)-- (-4,0);
\draw [line width=1.4pt] (-4,0)-- (-6,-2);
\draw [line width=1.4pt] (-2,-1)-- (0,0);
\draw [line width=1.4pt] (0,0)-- (2,-1);
\draw [line width=1.4pt] (2,-1)-- (4,0);
\draw [line width=1.4pt] (4,0)-- (6,2);
\draw [line width=1.4pt,dash pattern=on 1pt off 2pt] (-2.5,-0.75)-- (-2,2.8);
\draw [line width=1.4pt,dash pattern=on 1pt off 2pt] (-2,2.8)-- (2,2.375);
\draw [line width=1.4pt,dash pattern=on 1pt off 2pt] (2,2.375)-- (2.5,-0.75);
\draw [line width=1pt,dotted] (2,-2)-- (5,-2);
\draw [line width=1pt,dotted] (2,-3)-- (5,-3);
\draw (5.2,-2.5)  node {$\varepsilon$};
\draw [fill=black] (-2,2.8) circle (2.5pt); 
\draw (-2.4,2.8)  node {$y_0$};
\draw [fill=black] (2,2.375) circle (2.5pt); 
\draw (2.4,2.375)  node {$y_1$};
\draw [fill=black] (-2.5,-0.75) circle (2.5pt);
\draw [fill=black] (2.5,-0.75) circle (2.5pt);
\draw (0,-1.5)  node {$B_m$};
\end{tikzpicture}      
\end{center}
\caption{Modifying  $q$ to get a $\pi_n$ which avoids $B_m$}
\end{figure}
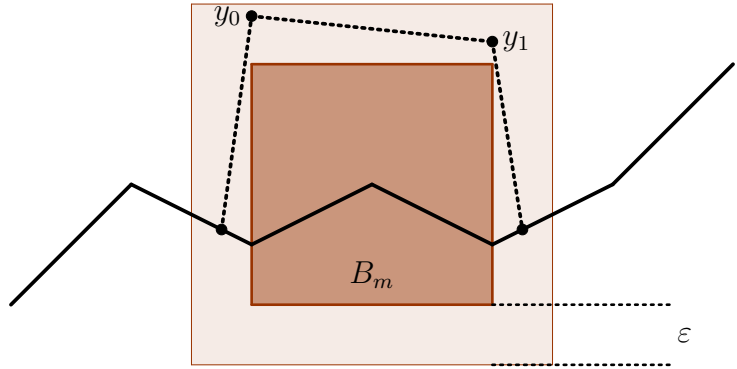

 The definition of $B_n$ is simple. Let $K$ be the union of all the graphs  
 of the maps from $(\pi_m\colon m\leq n)$ and of the all the balls from   $(B_m\colon m< n)$. Since $K$ is compact and it does not intersect $Q$ we can find some $r_n$ small enough so that the $d_{\infty}$-ball with center $(x_n,y_n)$ and radius $r_n$ does not intersect $K$. Since $x_n\in(0,1)$ we can further shrink $r_n$ to ensure that  this ball does intersect $\partial L$ either.
\end{proof}

\begin{claim}\label{C:2}
In the statement of previous claim,  we may additionally assume without loss of generality that  $\pi_n$ is the constant map ($\pi_n=p_n$) for all $n\in\mathbb{N}$ with $a_n=b_n$.
\end{claim}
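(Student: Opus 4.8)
The plan is to re-run the construction in the proof of Claim \ref{C:1} almost verbatim, with two small modifications that together force $\pi_n=p_n$ whenever $a_n=b_n$. The only obstruction to simply setting $\pi_n:=p_n$ (i.e.\ the constant map $t\mapsto a_n$) in the original argument is that the horizontal segment $H_{a_n}:=\{(t,a_n):t\in[0,1]\}$ might meet one of the previously chosen balls $B_m$, $m<n$, and a constant function has no room to circumvent it. So the first --- and key --- modification is to strengthen condition 2 of Claim \ref{C:1} by additionally demanding that every ball $B_n$ be disjoint from every horizontal integer line $H_c=\{(t,c):t\in[0,1]\}$, $c\in\mathbb{Z}$. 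To see this is harmless, note that the center $(x_n,y_n)$ of $B_n$ lies in $Q$, and each $H_c$ is exactly the graph of the function $p_m$ attached to the pair $(a_m,b_m)=(c,c)$; since $Q$ was chosen to miss the graph of every $p_m$, we get $y_n\notin\mathbb{Z}$, hence $\delta_n:=\mathrm{dist}(y_n,\mathbb{Z})>0$. Imposing the extra constraint $r_n<\delta_n$ when the radius of $B_n$ is chosen then guarantees $B_n\cap H_c=\emptyset$ for all $c$ (and, since already $r_n<1/2$, only the nearest integer line was ever a genuine constraint). This is just one more ``shrink $r_n$'' step, so the rest of the construction of $B_n$ is untouched.

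With this in place, the second modification is to declare, at each stage $n$ with $a_n=b_n$, that $\pi_n:=p_n$ instead of running the circumvention procedure; I would then verify that conditions 1(a)--1(c) and 2 still hold. Condition 1(a) is immediate, since $\min\{a_n,b_n\}-1=a_n-1\le a_n\le a_n+1=\max\{a_n,b_n\}+1$. For 1(b), any $m$ with $b_m-a_m=b_n-a_n=0$ also satisfies $a_m=b_m$, and if $m\neq n$ then $(a_m,b_m)\neq(a_n,b_n)$ forces $a_m\neq a_n$; since $\pi_m\equiv a_m$ (for $m<n$ by the same modification applied earlier, for $m>n$ by construction), the two constant functions $\pi_n$ and $\pi_m$ never agree. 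For 1(c), $\mathrm{graph}(\pi_n)=H_{a_n}$, which $Q$ avoids. For condition 2, the earlier balls $B_m$ ($m<n$) miss $\mathrm{graph}(\pi_n)=H_{a_n}$ by the first modification, while the later balls are required to miss every horizontal integer line, in particular $H_{a_n}$; disjointness of the $B_m$ from each other and from $\partial L$ is exactly as before.

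Finally, the stages with $a_n\neq b_n$ are handled exactly as in Claim \ref{C:1}: the circumvention procedure is unchanged, because the only new feature of the inductive data is the extra disjointness of the balls from the lines $H_c$, which plays no role there --- the balls enter that argument purely as obstacles to route $\pi_n$ around, and the graphs one must keep $\pi_n$ off for condition 1(b) are those $\pi_m$ with $b_m-a_m=b_n-a_n\neq0$, hence never constant. Thus the whole construction goes through, and now $\pi_n=p_n$ for every $n$ with $a_n=b_n$, as required. There is no serious obstacle here; the one point that requires a moment's care is the first modification --- that a single small ball can dodge all countably many integer horizontal lines at once --- and the observation $y_n\notin\mathbb{Z}$ (together with $r_n<1/2$) is precisely what makes that routine.
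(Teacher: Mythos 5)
Your proposal is correct and follows essentially the same route as the paper: the paper's proof also rests on the single observation that the union of the horizontal integer lines (the graphs of the constant $p_m$) can be avoided by shrinking each $r_n$ — which works precisely because the centers $(x_n,y_n)\in Q$ miss those graphs — after which one simply declines to perturb $p_n$ when $a_n=b_n$. Your explicit verification of conditions 1(a)--(c) and 2 is just the "additional bookkeeping" the paper alludes to.
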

\begin{proof}
 This is a matter of minor adjustments of the proof of Claim \ref{C:1} by including some additional bookkeeping.  The main observation is that if $S$ is the set of all $m\in \mathbb{N}$ with $a_n=b_n$, then the union of the graphs of the functions $\{p_m\colon m\in S\}$ is nowhere dense in $L$. Hence, for each $n\in\mathbb{N}$ we can  choose  $r_n$ small enough so that $B_n$ avoids the graph of $p_m$ for all $m\in S$. That is, there is no need to perturb $p_m$ to some new $\pi_m$ when $m\in S$.
\end{proof}

Let now $\mathcal{B}:=\{B_n\colon n\in\mathbb{N}\}$ be the collection of all  balls from the claim. Clearly $\mathcal{B}$ is a deballing recipe for $L$. Fix some $k,\ell\in\mathbb{Z}$. We will define  a  homeomorphism $h^{\mathcal{B}}_{k,\ell}\colon \mathcal{B}(L) \to  \mathcal{B}(L)$ satisfying the conclusion of the lemma. For every  $a,b\in\mathbb{Z}$ let $\pi_{a,b}$ denote the unique map from the claim  with $\pi_{a,b}(0)=a$ and $\pi_{a,b}(1)=b$. Let
\begin{eqnarray}
    Q_a:=\{(x,y)\in L\colon  \pi_{a,a}(x) \leq y \leq \pi_{a+1,a+1}(x) \}\label{Eq:1_fin} \\
    R_a:=\{(x,y)\in L\colon  \pi_{a+k,a+\ell}(x) \leq y \leq \pi_{a+k+1,a+\ell+1}(x) \} \label{Eq:2_fin}.
\end{eqnarray}

By  Property 1(b) of Claim \ref{C:1} we have that  $Q_a$ is a two--dimensional ball whose boundary is the union of $\{(0,y)\colon a\leq y\leq a+1 \} \cup \{(1,y)\colon a\leq y\leq a+1 \} $ with the graphs of $\pi_{a,a}$ and $\pi_{a+1,a+1}$ and similarly for $R_a$. By Properties 2(a) and 2(b) of the claim we also have that $\mathcal{B}(Q_a)$ and $\mathcal{B}(R_a)$ are homeomorphic to the Sierpinski carpet. By Theorem \ref{T:whyburn}  we have a homeomorphism $h_a\colon \mathcal{B}(Q_a) \to \mathcal{B}(R_a)$ which extends the map $\partial h_a \colon \partial \mathcal{B}(Q_a) \to \partial \mathcal{B}(R_a)$ with 
\[\partial h_a((x,y)) = \begin{cases} 
 (x,y+k) \; & \; \text{ if } x=0\\
 (x,y+\ell) \; & \; \text{ if } x=1\\
 (x,\pi_{a+k,a+\ell}) \; & \; \text{ if }    y=\pi_{a,a}(x)\\
 (x,\pi_{a+k+1,a+\ell+1}) \; & \; \text{ if }    y=\pi_{a+1,a+1}(x)\\
\end{cases}\]
Let finally $h^{\mathcal{B}}_{k,\ell}\colon \mathcal{B}(L) \to  \mathcal{B}(L)$ be given by setting $h^{\mathcal{B}}_{k,\ell}(x,y)=h_a(x,y)$ if $(x,y)\in Q_a$.

Clearly $h^{\mathcal{B}}_{k,\ell}$ extends $h^{\partial}_{k,\ell}$.  Moreover, if $(x,y)\in Q_a$ and $k,\ell\in \mathbb{Z}$ then since,
$(\tilde{x} ,\tilde{y}):= h^{\mathcal{B}}_{k,\ell}(x,y)\in R_a$,
by combining (\ref{Eq:2_fin})  above with Property (1)(a) from Claim \ref{C:1}  we have that:
\[\min\{a+k,a+\ell\}-1 \leq \tilde{y}\leq \max\{a+k+1,a+\ell+1\}+1.\]
But by Claim \ref{C:2} we may assume  that $a=\lfloor y \rfloor$, which completes the proof.
\end{proof}

\bibliography{References}
\bibliographystyle{alpha}
\end{document}